\newcommand{\Z}{\mathbb{Z}}
\newcommand{\R}{\mathbb{R}}
\newcommand{\PSL}{{\rm PSL}(2,\R)}
\newcommand{\SL}{{\rm SL}(2,\R)}
\newcommand{\SLn}{{\rm SL}(n+1,\R)}
\newcommand{\PSLn}{{\rm PSL}(n+1,\R)}
\renewcommand{\P}{\mathbb{R}{\rm P}}
\renewcommand{\H}{\mathbb{H}}
\newcommand{\F}{\mathcal{F}}
\newcommand{\M}{\mathcal{M}}
\newcommand{\matriz}[4]{\left(
\begin{array}{ll}
 #1 & #2 \\
 #3 & #4
\end{array}
\right)}
\newcommand{\norm}[1]{\parallel \!#1 \! \parallel}
\newcommand{\bs}{\backslash}
\newcommand{\scirc}{{\scriptscriptstyle \circ}}
\newcommand{\widebar}[1]{\mkern 1.7mu\overline{\mkern-1.7mu#1\mkern-1.7mu}\mkern 1.7mu}
\newtheorem{theorem}{Theorem}
\newtheorem{lemma}{Lemma}
\newtheorem{proposition}{Proposition}
\newtheorem{corollary}{Corollary}
\newtheorem*{theorem*}{Theorem}
\newtheorem{dualtheorem}{Theorem}
\theoremstyle{definition}
\newtheorem{definition}{Definition}
\newtheorem{example}{Example}
\newtheorem{remark}{Remark}
\newtheoremstyle{named}{}{}{\itshape}{}{\bfseries}{.}{.5em}{\thmnote{#3} #1}
\theoremstyle{named}
\begin{document}

\title[Horocycle flow on flat projective bundles]{Horocycle flow on flat projective bundles: \\ topological remarks and applications}

\author[F. Alcalde]{Fernando Alcalde Cuesta}
\address{\small ~Universidad de Santiago de Compostela, E-15782 Santiago de Compostela (Spain).}
\email{fernando.alcaldecuesta@gmail.com}

\author[F. Dal'Bo]{Fran\c{c}oise Dal'Bo}
\address{\small ~Institut de Recherche Math\'ematique de Rennes, Universit\'e de Rennes 1, F-35042 Rennes (France)}
\email{francoise.dalbo@univ-rennes1.fr}

\keywords{}

\subjclass[2020]{Primary 37B05, 37C85, 37D40}

\date{}

\dedicatory{}

\begin{abstract}
In this paper we study topological aspects of the dynamics of the foliated horocycle flow on flat projective bundles over hyperbolic surfaces and we derive ergodic consequences. 
If $\rho : \Gamma \to \PSLn$ is a representation of a non-elementary Fuchsian group $\Gamma$, the unit tangent bundle $Y$ associated to the flat projective bundle defined by $\rho$ admits a natural action of the affine group $B$ obtained by combining the foliated geodesic and horocycle flows. 
If the image $\rho(\Gamma)$ satisfies \emph{Conze-Guivarc'h conditions}, namely strong irreducibility and proximality, the dynamics of the $B$-action is captured by the proximal dynamics of $\rho(\Gamma)$ on $\P^n$ (Theorem A). 
In fact, the dynamics of the foliated horocycle flow on the unique $B$-minimal subset of $Y$ can be described in terms of dynamics of the horocycle flow on the non-wandering set in the unit tangent bundle $X$ of the surface $S= \Gamma \bs \H$ (Theorem B). 
Assuming the existence of a continuous limit map, we prove that the $B$-minimal set is an attractor for the foliated horocycle flow restricted to the proximal part of the non-wandering set in $Y$ (Theorem C). As a corollary, we deduce that the restricted flow admits a unique conservative ergodic $U$-invariant Radon measure (defined up to a multiplicative constant) if and only if $\Gamma$ is convex-cocompact. For example, the foliated horocycle flow on the 
sphere bundle defined by the Cannon-Thurston map is uniquely ergodic.

\end{abstract}

\maketitle

\section{Introduction} \label{SIntro}

In the 1930s G.A. Hedlund \cite{Hedlund} proved the minimality of the right action of the unipotent subgroup
$$
U = \{ \, \matriz{1}{s}{0}{1} \, | \; s \in \R \, \} 
$$
of $\PSL = \{\pm Id\} \bs \SL $ on the quotient $X = \Gamma \bs \PSL$ by a cocompact Fuchsian group $\Gamma$. Later H. Furstenberg \cite{Furstenberg2} obtained a stronger result, namely the $U$-action is 
uniquely ergodic. 
Identifying $\PSL$ and the unit tangent bundle of the hyperbolic plane $\H$ with the Poincar\'e metric, when $\Gamma$ is torsion-free, the quotient $X$ becomes the unit tangent bundle of the hyperbolic surface $S = \Gamma \bs \H$. In this geometric setting, the $U$-action on $X$ identifies with the \emph{horocycle flow} and we write
$$
h_s(\Gamma u)  = \Gamma u \matriz{1}{s}{0}{1}
$$ 
for all $u \in \PSL$ and all $s \in \R$.
Hedlund's and Furstenberg's results have been extended to the case where $\Gamma$ is finitely generated, but replacing $X$ by the non-wandering set $\Omega_X$ of the $U$-action. Notice that $\Omega_X$ is also the unique non-empty minimal invariant closed set for the action of the affine group  
$$
B = \{ \, \matriz{a}{b}{0}{a^{-1}}  \,  | \ b \in \R , a \in \R^+_\ast \, \}
$$
on $X$. In that case, the dynamic properties of the U-action from a double topological and measurable perspective can be gathered in the following statement:

\begin{theorem*}
Let $\Gamma$ be a finitely generated Fuchsian group. 
\medskip

\noindent
(1) For any $x \in \Omega_X$, either $xU$ is periodic or $\overline{xU} = \Omega_X$ \cite{Dal'Bobook,Eberlein,Hedlund}. 
\medskip

\noindent
(2) For any ergodic $U$-invariant Radon measure $\mu$ supported by $\Omega_X$, either $\mu$ is supported by a periodic orbit or $\mu$ is the Burger-Roblin measure up to a multiplicative constant \cite{Burger,Ratner1990,Ratner1992,Ratner1994,Roblin}.
\end{theorem*}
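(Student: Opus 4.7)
I would follow the classical Hedlund approach, built on the renormalization identity
\[
g_{-t}\, h_s\, g_t \;=\; h_{s e^{-t}}.
\]
The first step is to identify periodic $U$-orbits as exactly those whose associated geodesic points toward a parabolic fixed point of $\Gamma$ on $\partial \H$. Next, fix a non-periodic point $x \in \Omega_X$. By $U$-recurrence, select $s_n \to \infty$ with $xh_{s_n} \to y \in \Omega_X$, where the non-periodicity of $xU$ allows us to arrange $y \notin xU$. Rescaling by $t_n = \log s_n$ through the geodesic flow and applying the renormalization identity, one extracts from $\overline{xU}$ a limit point displaced from $x$ by a nontrivial element of the geodesic flow. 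Iterating along the geodesic direction populates $\overline{xU}$ with a full $B$-orbit, and since $\Omega_X$ is the unique $B$-minimal set, $\overline{xU} = \Omega_X$.

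\textbf{Plan for (2).} The strategy is to reduce the classification of ergodic $U$-invariant Radon measures on $\Omega_X$ to the classification of $\Gamma$-conformal measures on the limit set $\Lambda(\Gamma) \subset \partial \H$. Disintegrating a $U$-invariant Radon measure $\mu$ along the weakly stable foliation of the geodesic flow and using the transverse $B$-structure, one produces a $\Gamma$-quasi-invariant measure $\nu$ on $\Lambda(\Gamma)$; compatibility between $U$-invariance of $\mu$ and the $\Gamma$-action, encoded by the Busemann cocycle, forces $\nu$ to be conformal of some exponent $s$. The Patterson--Sullivan construction yields a canonical non-atomic $\delta$-conformal measure, with $\delta$ the critical exponent of $\Gamma$, and the shadow lemma together with ergodicity of the Bowen--Margulis--Sullivan measure under the geodesic flow gives uniqueness up to a scalar. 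Pulling back to $\Omega_X$ produces the Burger--Roblin measure. The remaining ergodic components must be purely atomic, hence carried by $\Gamma$-orbits of parabolic fixed points, and correspond exactly to the periodic-orbit measures.

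\textbf{Expected obstacles.} In (1) the delicate point is that $\Omega_X$ is generally non-compact when $\Gamma$ has cusps, so one must ensure the accumulation point $y$ produced by recurrence remains in a compact part of $\Omega_X$ where the renormalization argument applies; this relies on the structure of the limit set and the finiteness of the number of conjugacy classes of cusps. In (2) the main difficulty is the measure-rigidity step: uniqueness of the $\delta$-conformal measure and exclusion of exponents $s \ne \delta$. Outside the convex-cocompact case this goes far beyond the topological argument in (1), requiring either Ratner's measure rigidity for unipotent flows or the Sullivan--Patterson and shadow-lemma machinery adapted by Burger and Roblin to the Radon setting.
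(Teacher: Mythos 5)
The paper never proves this statement: it is quoted as background, with the proof delegated entirely to the cited references (Hedlund, Eberlein and Dal'Bo's book for (1); Burger, Ratner and Roblin for (2)). So the only comparison available is with those classical arguments, and your outline does follow them: Hedlund-style renormalization $g_{-t}h_sg_t=h_{se^{-t}}$ for the topological dichotomy, and the correspondence with conformal densities on the limit set plus Sullivan--Patterson theory (or Ratner) for the measure dichotomy. It is worth noting that the route this paper actually exploits in its own arguments, following Dal'Bo's book, is the dual one: the $U$- and $B$-actions on $X=\Gamma\backslash\PSL$ are replaced by the linear and projective $\Gamma$-actions on $E=\{\pm Id\}\backslash\R^2-\{0\}$ and $\P^1$, and statement (1) becomes the dichotomy ``$\Gamma v$ discrete or dense in $E(\Gamma)$'' for $v\in E(\Gamma)$; your renormalization route is a legitimate alternative, but it is not the formulation the rest of the paper builds on.

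Two steps in your plan for (1) are thinner than they look. First, the assertion that every non-periodic $x\in\Omega_X$ is $U$-recurrent is precisely where finite generation enters: one needs that the limit set of a geometrically finite group consists of conical limit points and bounded parabolic fixed points, that the latter give the periodic horocycles, and that vectors pointing to conical (more generally horocyclic) limit points have recurrent $U$-orbits; this requires proof, and the dichotomy genuinely fails for suitable infinitely generated groups, as the paper recalls. Second, recurrence alone only gives $h_{s_n}(x)\to x$; you cannot simply ``arrange $y\notin xU$''. The actual mechanism is to use non-periodicity together with non-wandering returns to produce elements $p_n\to e$ in $\PSL$, $p_n\notin U$, with $xp_n\in\overline{xU}$, and then the polynomial drift of $h_{-s}p_nh_s$ (equivalently the renormalization by $g_t$) to place elements of $B\setminus U$ acting on $x$ inside $\overline{xU}$; only after that does $B$-minimality of $\Omega_X$ finish the argument. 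As written, this key transverse-displacement step is missing. For (2), your outline is the Burger--Roblin strategy, but the crux you yourself flag --- Roblin's bijection between non-periodic ergodic $U$-invariant Radon measures and non-atomic ergodic conformal densities, together with Sullivan's theorem that for geometrically finite groups the only such density on $L(\Gamma)$ is the Patterson--Sullivan one of dimension $\delta$, the atomic ones being carried by parabolic fixed points --- is the entire content of the theorem, so that part of your proposal is a citation plan rather than a proof, which is in fact exactly the status the statement has in the paper.
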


As explained in \cite{Dal'Bobook}, it turns out that property (1) is true if and only if $\Gamma$ is finitely generated. However, the topological dynamics of the $U$-action on $\Omega_X$ is not well understood otherwise. On the other hand, it follows from Ratner's work that the measure $\mu$ in property (2) is finite if and only if $\mu$ supported by a periodic orbit or $\mu$ is the Haar measure (up to a constant) and in this case the surface $S$ has finite volume. 
\medskip 

In this paper we study the foliated horocycle flow on flat projective bundles over hyperbolic surfaces. Given a non-elementary Fuchsian group  $\Gamma$, we consider a representation 
$$
\rho : \Gamma \to \PSLn
$$
with $n \geq 1$. The subgroup $\Gamma_\rho = \{ \,\big(\gamma,\rho(\gamma)\big) \, | \, \gamma \in \Gamma \, \}$ of \mbox{$\PSL \times \PSLn$} acts properly discontinuously on $\tilde Y = \PSL \times \P^n$. As this action preserves the product structure of $\tilde Y$, the projective bundle $Y = \Gamma_\rho \bs \tilde Y$ over $X = \Gamma \bs \PSL$ admit a foliation transverse to the fibration $\pi : Y \to X$ 
(which sends $\Gamma_\rho(u,\chi) \in Y$ onto $\Gamma u \in X$). The leaves are 3-manifolds endowed with a natural $\PSL$-geometric structure. Clearly the $U$-action on $\tilde Y$ defined by right translation on the first factor induces an $U$-action on $Y$ preserving each leaf. This action defines the \emph{foliated horocycle flow} on $Y$ 
 \cite{Martinez-Matsumoto-Verjovsky}.
 In the same way, the affine group $B$ acts on $Y$ preserving each leaf. 
\medskip 

Our goal is to prove topological properties of the actions of $B$ and $U$ on $Y$ when $\rho$ satisfies two conditions, which we call \emph{Conze-Guivarc'h conditions}:
\medskip

\noindent
(CG1) $\rho(\Gamma)$ is strongly irreducible, 
\medskip 

\noindent
(CG2) $\rho(\Gamma)$ contains a proximal element.
\medskip 

\noindent
Both conditions guarantee the existence of a unique non-empty minimal $\rho(\Gamma)$-invariant closed set $L(\rho(\Gamma))$ in $\P^n$ \cite{Conze-Guivarc'h}. It is   the closure of the dominant directions of the proximal elements of $\rho(\Gamma)$.
\medskip

The following results extend well known properties of the actions of $B$ and $U$ on $X$ to the projective bundle $Y$:
\smallskip 

\begin{theorem} \label{thm:thmA} 
Let $\Gamma$ be a non-elementary Fuchsian group and $\rho : \Gamma \to \PSLn$ be a representation satisfying conditions {\rm (CG1)} and {\rm (CG2)}. Then there is a unique non-empty minimal $B$-invariant closed set $\M_B \subset Y$ (or $B$-minimal set in short), that is, for every point $y \in Y$, the orbit closure $\overline{yB} \supset \M_B$. 
\end{theorem}
\smallskip 

\begin{theorem} \label{thm:thmB} 

Under the same assumptions of Theorem~\ref{thm:thmA}, for each point $y \in \M_B$, we have: 
 $$
\overline{yU} = \M_B \; \Leftrightarrow \; \overline{\pi(y)U} = \Omega_X.
$$
\end{theorem}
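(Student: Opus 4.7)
The plan is to prove the two implications separately, using Theorem~\ref{thm:thmA} together with the classical fact that, for a non-elementary Fuchsian group $\Gamma$, the non-wandering set $\Omega_X$ is the unique non-empty $B$-minimal closed subset of $X$ (which follows from minimality of the $\Gamma$-action on the limit set $\Lambda(\Gamma)$).

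For the direction $(\Rightarrow)$, the key point is that $\pi:Y\to X$ is continuous, $B$-equivariant, and closed, since it is a fiber bundle with compact fibers $\P^n$. Closedness yields $\pi(\overline{yU})=\overline{\pi(y)U}$; closedness together with $B$-equivariance and $B$-minimality of $\M_B$ implies that $\pi(\M_B)$ is a closed $B$-invariant subset of $X$ in which every point has dense $B$-orbit, hence is $B$-minimal. By uniqueness, $\pi(\M_B)=\Omega_X$ and the implication follows.

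For the direction $(\Leftarrow)$, the inclusion $\overline{yU}\subseteq\M_B$ is immediate, so it suffices to produce the reverse inclusion. The natural tool is the $\omega$-limit set
\[
\omega_U^+(y)=\bigcap_{T>0}\overline{\{yh_s:s\geq T\}}\subseteq\overline{yU},
\]
a closed $U$-invariant subset of $\M_B$. Non-emptiness follows from $\omega_U^+(\pi(y))=\Omega_X$ (Hedlund's lemma applied in $X$, which uses the $U$-recurrence of $\pi(y)$ guaranteed by $\pi(y)\in\Omega_X$) combined with compactness of the fiber $\P^n$. If $\omega_U^+(y)$ is shown to be $B$-invariant, then $B$-minimality of $\M_B$ forces $\omega_U^+(y)=\M_B$, and hence $\overline{yU}=\M_B$.

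The main obstacle is therefore the $g_t$-invariance of $\omega_U^+(y)$. Given $z=\lim_n yh_{s_n}\in\omega_U^+(y)$ with $s_n\to+\infty$, the commutation $h_sg_t=g_th_{e^{-2t}s}$ rewrites $yh_{s_n}g_t=yg_th_{e^{-2t}s_n}$, exhibiting $zg_t$ as a limit along the $U$-orbit of $yg_t$ rather than of $y$. To remedy this, I would choose $\tau_n\to+\infty$ with $\pi(y)h_{\tau_n}\to\pi(z)g_t$ (possible because $\omega_U^+(\pi(y))=\Omega_X$) and extract a limit $yh_{\tau_n}\to z'\in\omega_U^+(y)$ with $\pi(z')=\pi(z)g_t$. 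The delicate step is then to align the fiber coordinates so that $z'=zg_t$, and this is where the Conze-Guivarc'h conditions intervene: strong irreducibility (CG1) and proximality (CG2) ensure that $L(\rho(\Gamma))$ is the unique minimal $\rho(\Gamma)$-invariant subset of $\P^n$ and provide elements of $\rho(\Gamma)$ with prescribed attracting directions. Combined with the structural description of the fibers of $\M_B$ over $\Omega_X$ implicit in the proof of Theorem~\ref{thm:thmA}, these should allow a refinement of $\tau_n$ (or a correction by a sequence in $\rho(\Gamma)$) that matches the fiber coordinates and yields $z'=zg_t$.
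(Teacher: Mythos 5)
Your forward implication is fine: $\pi$ is proper (fibres $\P^n$ are compact), hence closed and $B$-equivariant, so $\pi(\M_B)$ is $B$-minimal and equals $\Omega_X$, and $\pi(\overline{yU})=\overline{\pi(y)U}$; this is the easy half, which the paper gets implicitly by projecting the dual minimal set $E(\M)$ onto $E$. The problem is the converse. The step you yourself flag as "delicate" --- forcing $g_t$-invariance of $\omega_U^+(y)$ by "aligning the fibre coordinates" via "a refinement of $\tau_n$ or a correction by a sequence in $\rho(\Gamma)$" --- is precisely the heart of the theorem, and no argument is given for it. In the paper this is exactly what Lemmas~\ref{lem:twohyperbolic} and~\ref{lem:Gammaorbit} accomplish in the dual linear model: Guivarc'h's contraction estimate near the dominant direction $\chi_A$ of a proximal $A=\rho(\gamma)$ puts a point $(\alpha v_\gamma,\chi_A)$ into $\overline{\Gamma_\rho(v,\chi)}$, and then two hyperbolic elements $\gamma_1,\gamma_2$ with proximal images whose dominant eigenvalues generate a dense subgroup of $\R^*_+$ produce \emph{all} scalings $(\lambda v_{\gamma_2},\chi_{A_2})$ --- scaling being dual to the geodesic direction --- after which minimality of $\M$ finishes. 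Nothing in your sketch substitutes for this mechanism; saying that (CG1)--(CG2) "should allow" the matching is where the actual work lives.

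There is also a structural obstruction to your route as stated. Proving $\omega_U^+(y)=\M_B$ would show that the positive half-orbit $\{yh_s: s\ge 0\}$ is dense in $\M_B$, hence, projecting, that the positive half-horocycle of $\pi(y)$ is dense in $\Omega_X$. But the hypothesis only gives density of the \emph{full} horocycle orbit of $\pi(y)$, and the theorem is stated for arbitrary non-elementary $\Gamma$: for geometrically infinite (infinitely generated) groups, full-orbit density does not imply density of a prescribed half-orbit (Maucourant and Schapira exhibit dense horocycles with a non-dense half on geometrically infinite surfaces). So the intermediate statement you aim at is strictly stronger than the theorem and false in this generality; correspondingly, "$\omega_U^+(\pi(y))=\Omega_X$ by Hedlund's lemma" is unjustified, and $\pi(y)\in\Omega_X$ does not even guarantee $U$-recurrence of $\pi(y)$. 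The paper sidesteps all half-orbit issues by working, via duality, with full $U$-orbit closures in $Y$ as $\Gamma_\rho$-orbit closures in $E\times\P^n$; any repair of your argument would have to either restrict to situations where half-horocycles are dense (e.g.\ convex-cocompact $\Gamma$) or abandon the one-sided $\omega$-limit set in favour of the full orbit closure.
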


\begin{corollary} \label{cor:finitelygenerated}
Let $\Gamma$ be a non-elementary Fuchsian group and $\rho : \Gamma \to \PSLn$ be a representation satisfying conditions {\rm (CG1)} and {\rm (CG2)}. 
Then $\M_B$ is $U$-minimal (and therefore the unique $U$-minimal subset of $Y$) if and only if $\Gamma$ is convex-cocompact.
\end{corollary}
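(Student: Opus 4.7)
The strategy is to reduce the $U$-minimality of $\M_B \subseteq Y$ to the classical question of $U$-minimality of the non-wandering set $\Omega_X \subseteq X$, using Theorem~\ref{thm:thmB} as the bridge between the two settings.

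The first step is to identify $\pi(\M_B) = \Omega_X$. Since the projective bundle projection $\pi \colon Y \to X$ has compact fibers $\P^n$, it is proper, so $\pi(\M_B)$ is closed in $X$; it is $B$-invariant by equivariance of $\pi$. I would then check that it is in fact $B$-minimal: any non-empty closed $B$-invariant subset $A \subseteq \pi(\M_B)$ pulls back to a non-empty closed $B$-invariant subset $\pi^{-1}(A) \cap \M_B$ of $\M_B$, which equals $\M_B$ by Theorem~\ref{thm:thmA}, forcing $A = \pi(\M_B)$. Since $\Omega_X$ is the unique $B$-minimal subset of $X$ (as recalled in the introduction), one deduces $\pi(\M_B) = \Omega_X$.

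Combining this identification with Theorem~\ref{thm:thmB} yields the main reduction: $\M_B$ is $U$-minimal iff $\overline{\pi(y)U} = \Omega_X$ for every $y \in \M_B$, iff $\overline{xU} = \Omega_X$ for every $x \in \Omega_X$, i.e., iff $\Omega_X$ is itself $U$-minimal in $X$. At this point the corollary follows from the classical characterization documented in the Dal'Bo book cited in the introduction: for a non-elementary Fuchsian group $\Gamma$, the non-wandering set $\Omega_X$ of the $U$-action is $U$-minimal if and only if $\Gamma$ is convex-cocompact. One direction uses that a parabolic element of $\Gamma$ produces a periodic horocycle in $\Omega_X$ (a proper closed $U$-invariant subset obstructing minimality), while non-finite-generation provides further obstructions; the converse is a Hedlund-type argument valid once $\Omega_X$ is compact and free of periodic horocycles.

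For the parenthetical uniqueness claim, suppose $\M_B$ is $U$-minimal and let $N \subseteq Y$ be any other $U$-minimal subset. Two minimal sets are either equal or disjoint, so I need only rule out disjointness. The key observation is that $\overline{NA}$ is closed and $B$-invariant, because the normalization $a_t U a_{-t} = U$ implies that $A$-translates of a $U$-invariant set remain $U$-invariant; by Theorem~\ref{thm:thmA} it follows that $\overline{NA} \supseteq \M_B$. Using that $\pi(N)$ is a $U$-minimal subset of $X$ (by the same minimality transfer as in Step~1) and that in the convex-cocompact regime $\Omega_X$ is the only $U$-minimal subset of $X$, one gets $\pi(N) = \Omega_X$, whence $N$ lies in the compact set $\pi^{-1}(\Omega_X)$; compactness of $N$ then extracts a common point of $N$ and $\M_B$ from the relation $\M_B \subseteq \overline{NA}$, forcing $N = \M_B$. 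I expect this last compactness/convergence step to be the main technical obstacle; the iff itself is a fairly direct consequence of Theorems~\ref{thm:thmA} and~\ref{thm:thmB} combined with the classical dichotomy on $X$.
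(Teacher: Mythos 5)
For the main biconditional your argument is correct and is essentially the paper's own proof, just phrased upstairs instead of dually: the paper deduces from Theorem~\ref{thm:thmB*} that $E(\M)$ is $\Gamma_\rho$-minimal if and only if $E(\Gamma)$ is $\Gamma$-minimal, and then quotes \cite[Proposition V.4.3]{Dal'Bobook} for the equivalence of the latter with convex-cocompactness; your reduction via $\pi(\M_B)=\Omega_X$ and Theorem~\ref{thm:thmB} to the $U$-minimality of $\Omega_X$, followed by the same classical characterization, is the identical chain of ideas translated through the duality. (The identification $\pi(\M_B)=\Omega_X$ is correct; it also follows at once from the explicit description $\M_B=\{\,\Gamma_\rho(u,\chi)\,|\,(u(+\infty),\chi)\in\M\,\}$ together with the fact that $\M$ projects onto the minimal set $L(\Gamma)$.)

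Your treatment of the parenthetical uniqueness claim, however, contains a genuine gap. The step asserting that, in the convex-cocompact regime, $\Omega_X$ is the only $U$-minimal subset of $X$ is false unless $\Gamma$ is cocompact: for $u(+\infty)\notin L(\Gamma)$ the point $u(+\infty)$ is not a horocyclic limit point, so the horocycle orbit of $\Gamma u$ is a properly embedded copy of $\R$, hence a closed orbit and therefore itself a $U$-minimal set. Consequently you cannot conclude $\pi(N)=\Omega_X$ for an arbitrary $U$-minimal $N\subset Y$, nor that $N$ is compact; in fact, lifting such closed horocycle orbits into the bundle shows that $Y$ contains many $U$-minimal sets outside $\pi^{-1}(\Omega_X)$ whenever $\Gamma$ is not cocompact, so the uniqueness statement can only be meant within $\pi^{-1}(\Omega_X)$ (this is consistent with the paper's later corollary, which speaks of $U$-minimal subsets of $\pi^{-1}(\Omega_X)$, and with the paper's own restatement of the present corollary in Section 3, which drops the parenthetical altogether). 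Two further points in that part of your argument need repair even in the restricted setting: Theorem~\ref{thm:thmA} by itself does not give $\overline{NA}\supseteq\M_B$ for a possibly non-compact closed $B$-invariant set $\overline{NA}$ (existence of a minimal subset requires compactness); what one should invoke is Lemma~\ref{lem:key}, which shows via duality that every non-empty closed $B$-invariant subset of $Y$ contains $\M_B$. And your final extraction of a common point of $N$ and $\M_B$ from $\M_B\subseteq\overline{NA}$ must handle the case where the $A$-parameters tend to infinity, which you acknowledge but do not resolve; for $N\subset\pi^{-1}(\Omega_X)$ with $\Gamma$ convex-cocompact this can be completed using compactness and the fact that $Na$ is again $U$-minimal for $a\in A$, but as written the uniqueness-in-$Y$ argument does not go through.
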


 In the last part of the paper, we will add a strong condition on $\rho$, called \emph{Nielsen's condition}, implying the existence of a continuous section for $\pi$:
\medskip 

\noindent
 {\rm (N)}  $\rho$ induces a continuous map $\varphi : L(\Gamma) \to L(\rho(\Gamma))$,
 called \emph{limit map}, such that $\varphi \scirc \gamma = \rho(\gamma) \scirc \varphi$ for all $\gamma \in \Gamma$. 
\medskip 

\noindent
As $L(\rho(\Gamma))$ is minimal, the map $\varphi$ is always surjetive. If we denote
$$
Y_{prox} = \Gamma_\rho \bs \PSL \times L(\rho(\Gamma)),
$$ 
the $B$-invariant closed set $\Omega_{prox} =  Y_{prox} \cap \pi^{-1}(\Omega_X)$ inherits from $Y$ a natural structure of $L(\rho(\Gamma))$-fibre bundle over $\Omega_X$. By construction, it always contains the $B$-minimal set $\M_B$. Condition {\rm (N)} gives arise to a continuous section $\Phi : X \to Y$ for the fibration $\pi  : Y \to X$. 

\begin{theorem} \label{thm:thmC}
Let $\Gamma$ be a non-elementary Fuchsian group and \mbox{$\rho : \Gamma \to \PSLn$} be a representation satisfying conditions {\rm (CG1)}, {\rm (CG2)} and {\rm (N)}. Then 
\medskip 

\noindent
(1) $\M_B  = \Phi (\Omega_X)$, 
\medskip 

\noindent
(2) $\M_B$ is a $U$-attractor relative to $\Omega_{prox}$, i.e. for any point $y \in \Omega_{prox}$ and for any sequence $s_k \to \pm\infty$, we have: 
$$
h_{s_k}(y) \to y' ~ \Rightarrow ~ y' \in \M_B.
$$ 
\end{theorem}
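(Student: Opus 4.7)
The plan is to handle the two parts separately. For (1), because $\Phi$ is a continuous section of the fibration $\pi:Y\to X$, it restricts to a homeomorphism from $\Omega_X$ onto $\Phi(\Omega_X)$, with inverse $\pi|_{\Phi(\Omega_X)}$; in particular $\Phi(\Omega_X)$ is closed in $Y$. Since condition~(N) is built from the equivariant limit map $\varphi$ evaluated at the boundary endpoint of $u$ preserved by the $B$-action, the section $\Phi$ is $B$-equivariant, so the $B$-action on $\Phi(\Omega_X)$ is conjugate via $\Phi$ to the $B$-action on $\Omega_X$. Because the latter is $B$-minimal, so is the former, and the uniqueness of $\M_B$ provided by Theorem~\ref{thm:thmA} then forces $\Phi(\Omega_X)=\M_B$.

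For (2), I fix $y=\Gamma_\rho(u,p)\in\Omega_{prox}$ and a sequence $s_k\to+\infty$ with $h_{s_k}(y)\to y'$, and lift the convergence to the universal cover: there exist $\gamma_k\in\Gamma$ and a representative $(u',p')$ of $y'$ such that
\[
\gamma_k\,u\,n_{s_k}\longrightarrow u'\quad\text{and}\quad \rho(\gamma_k)(p)\longrightarrow p'.
\]
Since $n_{s_k}\to\infty$ in $\PSL$, the sequence $\gamma_k$ leaves every compact subset of $\Gamma$. After extraction, the $\gamma_k$ exhibit north--south dynamics on $\partial\H$: there exist $\eta^+,\eta^-\in\partial\H$ such that $\gamma_k\xi\to\eta^+$ for every $\xi\neq\eta^-$. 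A direct computation with the parabolic $n_{s_k}$ (using that the $B$-invariant endpoint of $u\,n_{s_k}$ is constant, and that this endpoint of $u$ is distinct from $\eta^-$ after further extraction, thanks to $\Gamma u\in\Omega_X$) identifies the $B$-invariant endpoint of $u'$ with $\eta^+$.

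On the projective side, (CG1) and (CG2) permit a further extraction so that $\rho(\gamma_k)$ has north--south dynamics on $\P^n$ with attractor $p_+$ and repelling hyperplane $H_-$. The equivariance $\varphi\circ\gamma=\rho(\gamma)\circ\varphi$ from~(N), the continuity of $\varphi$ at $\eta^+$, and the fact that $L(\rho(\Gamma))$ is not contained in any projective hyperplane (by strong irreducibility) together force $p_+=\varphi(\eta^+)$. Hence, provided $p\notin H_-$, one obtains $p'=\varphi(\eta^+)$; combined with the identification of the $B$-invariant endpoint of $u'$ with $\eta^+$, this gives $y'=\Phi(\pi(y'))\in\M_B$ by part~(1).

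The main obstacle I anticipate is the degenerate case in which $p$ lies on the limit repelling hyperplane $H_-$. Strong irreducibility (CG1) prevents $L(\rho(\Gamma))$ from being contained in $H_-$, so one may perturb $p$ within $L(\rho(\Gamma))$ by the action of a suitable element of $\rho(\Gamma)$ to escape $H_-$, and then recover the conclusion for the original $p$ by a diagonal extraction in the spirit of Conze--Guivarc'h. Once this delicate projective step is secured, part~(2) reduces, via~(1), to the attracting dynamics exhibited above.
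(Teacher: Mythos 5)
Your proof of part (1) is correct and is essentially the paper's own argument transported between the bundle and its dual picture: the graph of $\varphi$ is a non-empty closed $\Gamma_\rho$-invariant subset of $\P^1\times\P^n$ which is minimal because it is a graph over the minimal set $L(\Gamma)$, hence equals $\M$ by uniqueness, and dually $\Phi(\Omega_X)=\M_B$; your formulation via $B$-equivariance of $\Phi$ and $B$-minimality of $\Omega_X$ expresses the same fact.

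Part (2), however, has a genuine gap, and it sits exactly where you yourself flag the ``main obstacle''. First, the extraction you invoke --- that (CG1) and (CG2) allow you to assume $\rho(\gamma_k)$ has north--south dynamics on $\P^n$ with a point attractor $p_+$ and a repelling hyperplane $H_-$ --- is not justified: for an arbitrary divergent sequence in a strongly irreducible proximal subgroup the projective limits may have image of rank greater than one; (CG1)--(CG2) provide proximal elements in the group, not asymptotic proximality of the particular sequence $\rho(\gamma_k)$ produced by the horocycle dynamics (tensor products of two non-conjugate Fuchsian representations already give such groups with divergent, non-proximal sequences). Second, and more seriously, your treatment of the degenerate case $p\in H_-$ does not work: knowing that $\rho(\gamma_k)q\to\varphi(\eta^+)$ for a perturbed point $q=\rho(\gamma')p\notin H_-$ gives no information whatsoever about $\lim_k\rho(\gamma_k)p$, which is the point $p'$ you must identify, and no ``diagonal extraction'' recovers it; Example~\ref{ex:lorentz} of the paper is precisely a warning that purely projective attraction arguments fail here. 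Note also that your parenthetical claim that $u(+\infty)\neq\eta^-$ is false: the Busemann computation shows $\gamma_k^{-1}(i)\to u(+\infty)$, so in fact $\eta^-=u(+\infty)$. The missing idea is to use the surjectivity of $\varphi$ to write $p=\varphi(\xi)$ with $\xi\in L(\Gamma)$ and push the \emph{whole orbit} through the equivariance: $\rho(\gamma_k)p=\varphi(\gamma_k\xi)$, and since the repelling point of the sequence on $\partial\H$ is $u(+\infty)$, one has $\gamma_k\xi\to u'(+\infty)$ whenever $\xi\neq u(+\infty)$, whence $p'=\varphi(u'(+\infty))$ by continuity of $\varphi$; the excluded case $\chi=\varphi(u(+\infty))$ means $y\in\M_B$, which is trivial because $\M_B$ is closed and $U$-invariant. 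This is the paper's route, and it never requires any proximal behaviour of the sequence $\rho(\gamma_k)$ on $\P^n$.
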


\begin{corollary} \label{cor:onetoone}
Under the conditions of Theorem~\ref{thm:thmC}, if $m$ is a conservative ergodic $U$-invariant Radon measure on $Y$ supported by $\Omega_{prox}$, then the support of $m$ is equal to $\M_B$ and there exist a conservative ergodic $U$-invariant Radon measure $\mu$ on $X$ supported by $\Omega_X$ such that $m = \Phi_\ast \mu$.
\end{corollary}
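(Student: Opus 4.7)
The plan is to first localize the support of $m$ inside $\M_B$ by combining the attractor property in Theorem~\ref{thm:thmC}(2) with the conservativity of $m$, and then to use Theorem~\ref{thm:thmC}(1) to transport $m$ down to a Radon measure $\mu = \pi_\ast m$ on $\Omega_X$ whose required properties are inherited from $m$ essentially by bookkeeping.

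First I would prove $\mathrm{supp}(m) \subset \M_B$. Fix $y \in \mathrm{supp}(m)$; since $\mathrm{supp}(m) \subset \Omega_{prox}$ and $\Omega_{prox}$ is closed, $y \in \Omega_{prox}$. Choose a countable decreasing basis $(V_j)_{j \in \N}$ of open neighborhoods of $y$ in $Y$. Each $V_j$ has positive $m$-measure, so conservativity and ergodicity of the $U$-action give, for each $j$, a full-$m$-measure set of points in $\Omega_{prox}$ whose forward orbit returns to $V_j$ at times going to $+\infty$ (this is the standard consequence of Hopf's ratio ergodic theorem for a $U$-invariant Radon measure). A diagonal extraction from the intersection of these full-measure sets produces a single $y_0 \in \Omega_{prox}$ and a sequence $s_k \to +\infty$ with $h_{s_k}(y_0) \to y$. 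Theorem~\ref{thm:thmC}(2) then forces $y \in \M_B$.

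Once the support is localized, Theorem~\ref{thm:thmC}(1) tells us that $\M_B = \Phi(\Omega_X)$ and that the restriction of $\Phi$ to $\Omega_X$ is a homeomorphism onto $\M_B$ with inverse $\pi|_{\M_B}$. Setting $\mu = \pi_\ast m$, which is concentrated on $\Omega_X$ since $\mathrm{supp}(m) \subset \pi^{-1}(\Omega_X)$, we obtain $m = \Phi_\ast \mu$. The measure $\mu$ is Radon because $\pi$ is proper on its image, so $\mu(K) = m(\pi^{-1}(K)) < +\infty$ for every compact $K \subset X$. Since $\pi \circ h_s = h_s \circ \pi$, the $U$-invariance of $\mu$ is immediate; and because $\Phi$ is itself $U$-equivariant on $\Omega_X$ (the horocycle flow preserves the forward boundary endpoint of the underlying geodesic, from which $\Phi$ is built via $\varphi$), $U$-invariant Borel subsets of $\Omega_X$ correspond, through $\Phi$, to $U$-invariant Borel subsets of $\M_B$ of the same $m$-measure. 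Ergodicity of $\mu$ then follows directly from ergodicity of $m$, and conservativity of $\mu$ from conservativity of $m$ by transferring return-to-$A$ statements through $\Phi$.

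The hard part is the first step: turning the topological attractor property into the measure-theoretic containment $\mathrm{supp}(m) \subset \M_B$. What makes this work is that conservativity of a $U$-invariant Radon measure produces orbits returning infinitely often at arbitrarily large \emph{positive} times, which is exactly the type of convergence $s_k \to +\infty$ required in Theorem~\ref{thm:thmC}(2). A small but essential check is the $U$-equivariance of $\Phi$ on $\Omega_X$; this is where the paper's specific construction of $\Phi$ from the limit map $\varphi$ — using the forward endpoint of the geodesic, which is $U$-invariant — must be invoked. Granted this, Steps~2 and~3 reduce to routine transfer of properties through the homeomorphism $\Phi$.
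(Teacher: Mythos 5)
Your proposal is correct and follows essentially the same route as the paper: use conservativity to produce forward-time limits, apply the attractor property of Theorem~\ref{thm:thmC}(2) to place the mass of $m$ in $\M_B$, then push forward by $\pi$ and transfer $U$-invariance, conservativity and ergodicity through the $U$-equivariant homeomorphism $\Phi:\Omega_X\to\M_B$. The paper's first step is a bit more direct than yours: by Poincar\'e recurrence alone, $m$-almost every $y\in\Omega_{prox}$ satisfies $h_{s_n}(y)\to y$ for some $s_n\to+\infty$, so the attractor property applied with $y'=y$ puts $y\in\M_B$ immediately, with no need for ergodicity, the neighborhood basis, or the diagonal extraction in your argument.
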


\begin{corollary} \label{cor:uniqerg}
Under the conditions of Theorem~\ref{thm:thmC}, assume $\Gamma$ is finitely generated. Then there is a unique conservative ergodic $U$-invariant Radon measure $m$ on $\Omega_{prox}$ (defined up to a multiplicative constant and supported by the unique $U$-minimal set $\M_B$ in $\Omega_{prox}$) if and only if $\Gamma$ is convex-cocompact. In particular, there is a unique $U$-invariant probability measure $m$ on $Y_{prox}$ if and only if $\Gamma$ is cocompact. 
\end{corollary}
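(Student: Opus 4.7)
The plan is to reduce both halves of the corollary to the Burger-Ratner-Roblin classification of ergodic $U$-invariant Radon measures on $\Omega_X$ recalled in the introduction, using Corollary~\ref{cor:onetoone} as the bridge between measures on $Y_{prox}$ and measures on $X$.

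First I will show that the push-forward $\mu \mapsto \Phi_\ast \mu$ is a bijection, up to multiplicative constants, between the cones of conservative ergodic $U$-invariant Radon measures on $\Omega_X$ and on $\Omega_{prox}$. One direction is exactly Corollary~\ref{cor:onetoone}. For the reverse, the continuous section $\Phi$ produced in Theorem~\ref{thm:thmC}(1) is a $U$-equivariant homeomorphism from $\Omega_X$ onto $\M_B$: the $U$-equivariance follows from $\pi \scirc \Phi = $ identity, the $U$-equivariance of $\pi$, and the $U$-invariance of $\M_B = \Phi(\Omega_X)$. Hence $\Phi_\ast$ sends any conservative ergodic $U$-invariant Radon measure on $\Omega_X$ to one of the same type supported on $\M_B \subset \Omega_{prox}$. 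With this bijection in hand, the first equivalence reduces to its analogue on $\Omega_X$: every ergodic $U$-invariant Radon measure on $\Omega_X$ is either the Burger-Roblin measure or a periodic orbit measure, and periodic $U$-orbits on $X$ exist if and only if $\Gamma$ has parabolic elements, equivalently (for $\Gamma$ finitely generated Fuchsian) if and only if $\Gamma$ fails to be convex-cocompact.

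For the ``in particular'' statement, an arbitrary $U$-invariant probability measure $m$ on $Y_{prox}$ is conservative by Poincar\'e recurrence, and its ergodic decomposition consists of conservative ergodic $U$-invariant probability measures. Because $\pi$ semi-conjugates the $U$-actions and the non-wandering set downstairs is $\Omega_X$, each such component is supported in $\Omega_{prox}$ and, by the bijection above, takes the form $\Phi_\ast \mu$ with $\mu$ an ergodic $U$-invariant finite measure on $\Omega_X$. The finiteness criterion due to Ratner, quoted in the introduction, says that such a $\mu$ is finite only if it is carried by a periodic orbit or if it is the Haar measure (which requires $S$ to have finite volume). Splitting cases: if $\Gamma$ is cocompact, Haar is the unique finite ergodic measure on $\Omega_X$ and $\Phi_\ast$Haar is the unique probability on $Y_{prox}$; if $\Gamma$ is convex-cocompact but not cocompact, there are no periodic orbits and Burger-Roblin is infinite, so no $U$-invariant probability measure on $Y_{prox}$ exists at all; if $\Gamma$ is finitely generated with at least one cusp, distinct cusps yield distinct periodic-orbit probability measures, so uniqueness fails. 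Thus the unique $U$-invariant probability case is precisely the cocompact one.

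The technical step requiring most care is the verification that $\Phi_\ast$ is a bijection preserving conservativity, ergodicity and the Radon property, together with the fact that every $U$-invariant probability measure on $Y_{prox}$ is automatically concentrated on $\Omega_{prox}$. Once these are in place, both statements follow mechanically from the classical Burger-Ratner-Roblin classification of $U$-invariant Radon measures on $\Omega_X$ without further dynamical input from the projective bundle.
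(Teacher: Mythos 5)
Your argument is correct and takes essentially the same route as the paper: Corollary~\ref{cor:onetoone}, together with the observation that $\Phi$ is a $U$-equivariant homeomorphism from $\Omega_X$ onto $\M_B$, reduces both statements to the Burger--Ratner--Roblin classification of $U$-invariant Radon measures on $\Omega_X$ recalled in the introduction, which is precisely the paper's (much terser) proof, with the details of the correspondence $\mu \mapsto \Phi_\ast\mu$ spelled out more explicitly. One small fix: when $\Gamma$ is finitely generated with exactly one cusp, non-uniqueness of $U$-invariant probability measures comes from the one-parameter family of closed horocycles around that single cusp (each carrying its own periodic-orbit measure), not from ``distinct cusps''.
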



The uniqueness of $U$-invariant probability measures on $Y$ projecting to Haar measure on $X$ has been proved by C. Bonatti, A. Eskin and A. Wilkinson \cite{Bonatti-Eskin-Wilkinson} when $\Gamma$ has finite covolume. Here we use Nielsen's condition (N) to deduce a similar property, both for finite and infinite measures, from the existence of a topological attractor. However, a strictly ergodic approach can be also applied.
Details will be discussed elsewhere.

\section{Preliminaries} \label{SPre}

A matrix $A \in {\rm SL}(n+1,\R)$ is said to be \emph{proximal} if $A$ admits a simple dominant real eigenvalue $\lambda_A$. Let $w_A \in \R^{n+1}$ be an eigenvector associated to $\lambda_A$ and $\chi_A \in \P^n$ its direction, also called \emph{dominant} for $A$. Further, we have the decomposition
$$
\R^{n+1} = \R w_A \oplus W_A
$$
where 
$$
W_A = \{ \, w \in \R^{n+1} \, | \, \lambda_A^{-k}A^k w \to 0 \text{ as } k \to +\infty \, \}.
$$

\begin{definition} \label{def:Conze-Guivarc'h}
Let $G$ be a subgroup of $\SLn$. We say $G$ is:
\medskip

\noindent
{\rm (CG1)} \emph{strongly irreducible} if there does no exist any proper non-trivial subspace of $\R^{n+1}$ invariant by the action of a subgroup of finite index of $G$; 
\medskip

\noindent
{\rm (CG2)} \emph{proximal} if $G$ contains a proximal element $A$. 
\medskip

\noindent
Both conditions will be called \emph{Conze-Guivarc'h conditions}.
\end{definition} 

\noindent
Conditions {\rm (CG1)} and {\rm (CG2)} are satisfied by $G$ if and only if they are satisfied by its Zariski closure in $\SLn$ \cite{Conze-Guivarc'h}. But these conditions do not imply that $G$ is Zariski dense in $\SLn$ (since ${\rm SO}(n,1)$ satisfies {\rm (CG1)} and {\rm (CG2)}).

\begin{proposition}[\cite{Conze-Guivarc'h}] \label{prop:CG} Let $G$ be a subgroup of $\SLn$ satisfying {\rm (CG1)} and {\rm (CG2)}. Then 
\[
L(G) = \overline{ \{ \, \chi_A \in \P^n \, | \, A \in G \text{ proximal} \, \} }. 
\] 
is the unique $G$-minimal set in $\P^n$. \qed
\end{proposition}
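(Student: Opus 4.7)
The plan is to prove that every non-empty closed $G$-invariant subset $M \subset \P^n$ must contain every dominant direction $\chi_B$ with $B \in G$ proximal. Once this key claim is established, taking closures shows $L(G) \subset M$ for all such $M$; applying this to $M = L(G)$ itself --- which is non-empty by (CG2), closed by definition, and $G$-invariant because $gAg^{-1}$ is proximal with dominant direction $g\chi_A$ whenever $A$ is --- proves that $L(G)$ has no proper non-empty closed $G$-invariant subset, hence is $G$-minimal; applying it to any other $G$-minimal set $M$ forces $L(G) \subset M$ and then $M = L(G)$ by minimality of $M$. This gives existence, minimality, and uniqueness in one stroke.

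To prove the key claim, fix a proximal $B \in G$ with dominant direction $\chi_B$ and decomposition $\R^{n+1} = \R w_B \oplus W_B$ as recalled in the preliminaries, and pick $\chi \in M$. The defining property of $W_B$ gives $B^k v \to \chi_B$ in $\P^n$ for every direction $v \notin \P(W_B)$, so if $\chi \notin \P(W_B)$ one concludes $\chi_B \in M$ immediately from the closedness and $G$-invariance of $M$.

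The remaining case $\chi \in \P(W_B)$ is where strong irreducibility is essential, and is the main obstacle. Suppose, for contradiction, that the whole orbit $G\cdot\chi$ were contained in $\P(W_B)$. Then the linear span of $G\cdot\chi$ in $\R^{n+1}$ would be a non-zero (it contains the line representing $\chi$), proper (it sits inside $W_B\neq \R^{n+1}$), $G$-invariant subspace, contradicting (CG1) --- in fact even plain irreducibility, which is weaker. Consequently there exists $g \in G$ with $g\chi \notin \P(W_B)$; applying the previous paragraph to $g\chi \in M$ gives $\chi_B \in M$. This closes the argument: the contracting dynamics of a proximal element furnishes convergence to $\chi_B$, while strong irreducibility is precisely what allows one to first move $\chi$ off the repelling hyperplane $\P(W_B)$.
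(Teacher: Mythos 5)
Your proof is correct, and there is nothing in the paper to compare it against: the paper states Proposition~\ref{prop:CG} as a quoted result of Conze--Guivarc'h and gives no proof of its own. Your argument is the standard one, and it is in fact the same mechanism the paper deploys in Lemma~\ref{lem:key}: use irreducibility to move the given point off the repelling projective hyperplane $\widebar{W}_B$, then let the contraction $B^k\chi \to \chi_B$ (valid for every $\chi \notin \widebar{W}_B$) force $\chi_B$ into the closed invariant set; your observation that plain irreducibility already suffices here, strong irreducibility being needed elsewhere in the theory, is also accurate.
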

 
\begin{remark}
Assume $G \subset \PSLn$ is discrete and consider the $G$-action induced on $L^c(G) = \P^n - L(G)$. For $n=1$, as this action is properly discontinuous, the set $L(G)$ is 
a \emph{$G$-attractor} (i.e. for any point $\xi \in \P^1$ and for any non stationary sequence $g_k$ in $G$, the condition $g_k.\xi \to \xi'$ implies $\xi' \in L(G)$) which captures the proximal dynamics of $G$. However, for $n \geq 2$, these properties are not true in general as the following example proves.
\end{remark}

\begin{example} \label{ex:lorentz}
Consider $\R^3$ equipped with the Lorentz quadratic form 
$$
q(x) = x_1^2+x_2^2-x_3^2.
$$
For $i=-1,0,1$, denote $\mathcal{H}_i = \{ \, x \in \R^3  \,  | \, q(x) = \, i \}$ and let
$p : \R^3-\{0\} \to \P^2$ be the canonical projection. Let $SO^+(2,1)$ be the connected component of the identity in the group $SO(2,1)$ of orientation-preserving linear isometries of $q$ and take a discrete subgroup $G$ of $SO^+(2,1)$. If $G$ is non-elementary and contains no elliptic elements, then $G \bs \mathcal{H}_{-1}^+$ is isometric to a hyperbolic surface $S$, where $\mathcal{H}_{-1}^+ = \mathcal{H}_{-1} \cap \{ \,  x \in \R^3 \, | \, x_3 \geq 0 \, \}$ (see \cite{Dal'Bobook}). The limit set $L(G)$ is contained into $p(\mathcal{H}_0^+ -\{0\})$. 
For any vector $x \in \mathcal{H}_1$, the orthogonal plane (with respect to $q$) intersects 
$\mathcal{H}_0$ along two lines $D_1(x)$ and $D_2(x)$. Let $\mathcal{H}_1(G)$ be the set of vectors $x \in \mathcal{H}_1$ such that the directions of $D_1(x)$ and $D_2(x)$ belong to $L(G)$. This is a $G$-invariant closed subset of $\R^3-\{0\}$. 
According to \cite[Proposition VI.2.5]{Dal'Bobook}, the dynamics of the $G$-action on $\mathcal{H}_1(G)$ is dual to that of the geodesic flow on the non-wandering set of $T^1 S$. In particular,
the $G$-action on $\mathcal{H}_1(G)$ has dense orbits (see \cite[Property VI.2.12]{Dal'Bobook}), as well many non-empty proper minimal sets, and hence the $G$-action on the closure $\F(G)$ of $p(\mathcal{H}_1(G))$ in $\P^2$ also has dense orbits, as well many non-empty invariant closed sets $F \subset \F(G)$ such that $L(G) \subset F$. In conclusion, the $G$-action on $L^c(G)$ is not discontinuous and $L(G)$ is far from being a $G$-attractor. 
\end{example}

Let $\Gamma$ be a non-elementary Fuchsian group and $\rho : \Gamma \to \SLn$ a representa\-tion satisfying conditions {\rm (CG1)} and {\rm (CG2)}. Theorems~\ref{thm:thmA}~and \ref{thm:thmB} will be proved using a dual approach. Namely,  as the linear action of $\Gamma$ on $E =  \{ \pm Id \}\bs \R^2-\{0\}$ and the projective action of $\Gamma$ on $\P^1$ are conjugated to the $\Gamma$-actions on
$\PSL/U$ and $\PSL/B$ respectively, both actions are dual to the $U$-action and the $B$-action on $X = \Gamma \bs \PSL$. 
\medskip 

In our case, the linear and projective actions extend to actions of 
$$\Gamma_\rho = \{ \, \big(\gamma,\rho(\gamma)\big) \, | \, \gamma \in \Gamma \, \}$$ on $E \times \P^n$ and $\P^1 \times \P^n$. As before, they are dual to the $U$-action and the $B$-action on the flat projective bundle $Y= \Gamma_\rho \bs \tilde Y$ over $X$ where $\tilde Y = \PSL \times \P^n$. From a geometrical point of view, $Y$ is the unitary tangent bundle to the foliation by hyperbolic surfaces on $\Gamma_\rho \bs \H \times \P^n$ which is induced by the horizontal foliation on $\H \times \P^n$. 

\begin{dualtheorem} \label{thm:thmA*}
Under the assumptions of Theorem~\ref{thm:thmA}, there is a unique non-empty minimal $\Gamma_\rho$-invariant closed set $\M \subset \P ^1 \times \P^n$. Moreover $\M \subset L(\Gamma) \times L(\rho(\Gamma))$.
\end{dualtheorem}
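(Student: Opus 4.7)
The plan is to prove the statement in three steps: (i) existence of a $\Gamma_\rho$-minimal subset by compactness; (ii) the inclusion $\M \subset L(\Gamma) \times L(\rho(\Gamma))$ for any minimal subset; (iii) uniqueness, via contraction by a proximal element.

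Step (i) follows from Zorn's lemma, as $\P^1 \times \P^n$ is compact. For step (ii), let $M$ be a non-empty closed $\Gamma_\rho$-invariant subset of $\P^1 \times \P^n$. The second projection $p_2(M) \subset \P^n$ is closed and $\rho(\Gamma)$-invariant, so by Proposition~\ref{prop:CG} it contains $L(\rho(\Gamma))$. Pick $(\xi_0,\eta_0) \in M$ with $\eta_0 \in L(\rho(\Gamma))$. Since $\Gamma$ is non-elementary Fuchsian, the orbit $\Gamma \xi_0$ accumulates on $L(\Gamma)$, so there exist $\gamma_n \in \Gamma$ with $\gamma_n \xi_0 \to \xi^* \in L(\Gamma)$; after extraction, $\rho(\gamma_n)\eta_0 \to \eta^* \in L(\rho(\Gamma))$ (since $L(\rho(\Gamma))$ is closed and $\rho(\Gamma)$-invariant). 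Thus $M \cap (L(\Gamma) \times L(\rho(\Gamma))) \neq \emptyset$, and if $M$ is minimal, this intersection equals $M$, giving $M \subset L(\Gamma) \times L(\rho(\Gamma))$; in particular $p_2(M) = L(\rho(\Gamma))$.

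For step (iii), fix $\gamma_0 \in \Gamma$ with $A := \rho(\gamma_0)$ proximal (provided by (CG2)). Since $A$ has infinite order, $\gamma_0$ cannot be elliptic in the Fuchsian group $\Gamma$, and is therefore hyperbolic or parabolic. Let $\xi_0^+$ be its attracting (resp.\ unique) fixed point on $\P^1$, and denote by $\chi_A \in L(\rho(\Gamma))$ and $p(W_A) \subsetneq \P^n$ the attracting direction and contracting projective subspace of $A$. For any minimal set $M$, the goal is to produce $(\xi,\eta) \in M$ with $\eta \notin p(W_A)$ and, in the hyperbolic case, $\xi \neq \xi_0^-$; then $(\gamma_0^k, A^k)(\xi,\eta) \to (\xi_0^+, \chi_A)$ as $k \to +\infty$, forcing $(\xi_0^+, \chi_A) \in M$. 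Since this limit does not depend on $M$, every minimal set contains it, and uniqueness follows.

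The main obstacle is this genericity argument. The parabolic case is easy: any $(\xi,\eta) \in M$ with $\eta \notin p(W_A)$ works, and exists because $p_2(M) = L(\rho(\Gamma))$ is not contained in the proper projective subspace $p(W_A)$---by (CG1) the $\rho(\Gamma)$-invariant linear span of $L(\rho(\Gamma))$ in $\R^{n+1}$ is the whole space. In the hyperbolic case one must further rule out $M \subset (\{\xi_0^-\} \times \P^n) \cup (\P^1 \times p(W_A))$. Assuming such an inclusion, the fiber $M \cap (\{\xi_0^-\} \times \P^n)$ cannot sit inside $\{\xi_0^-\} \times p(W_A)$ (else $M \subset \P^1 \times p(W_A)$, already excluded), so one can pick $(\xi_0^-, \eta_0) \in M$ with $\eta_0 \in L(\rho(\Gamma)) \setminus p(W_A)$. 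For every $\gamma \in \Gamma$ with $\gamma \xi_0^- \neq \xi_0^-$---a cofinite family, since $\xi_0^-$ is not a global fixed point of the non-elementary $\Gamma$---the inclusion forces $\rho(\gamma)\eta_0 \in p(W_A)$; combined with the density of $\rho(\Gamma)\eta_0$ in the perfect minimal set $L(\rho(\Gamma))$ (perfectness following from (CG1), which forbids finite orbits of finite-index subgroups), this yields $L(\rho(\Gamma)) \subset p(W_A)$, contradicting (CG1).
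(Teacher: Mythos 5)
Your overall plan (show that every minimal set must contain the single point $(\xi_0^+,\chi_A)$, so that any two minimal sets intersect and hence coincide) is sound, and steps (i) and (ii) are correct; this is close in spirit to the paper's key lemma, which shows more strongly that \emph{every} non-empty closed invariant set contains \emph{all} pairs $(\gamma^+,\chi_A)$ with $A=\rho(\gamma)$ proximal. The gap is in the hyperbolic case of step (iii): the family $\{\gamma\in\Gamma \,:\, \gamma\xi_0^-\neq\xi_0^-\}$ is \emph{not} cofinite. The stabilizer of $\xi_0^-$ in $\Gamma$ contains the infinite cyclic group $\langle\gamma_0\rangle$ (it is in fact an infinite cyclic group of hyperbolic elements sharing the axis of $\gamma_0$); the observation that $\xi_0^-$ is not a global fixed point only shows the stabilizer is a proper subgroup, not a finite one. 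Because of this, your density inference breaks down as written: you only know $\rho(\gamma)\eta_0\in \widebar{W}_A$ for $\gamma$ outside the stabilizer, so all you get is $L(\rho(\Gamma))\subset \widebar{W}_A\cup\overline{\{\rho(\gamma_1)^k\eta_0 : k\in\Z\}}$, where $\gamma_1$ generates the stabilizer; and this exceptional orbit genuinely escapes $\widebar{W}_A$, since $\rho(\gamma_1)^k\eta_0\to\chi_A\notin\widebar{W}_A$ as $k\to+\infty$. So the conclusion $L(\rho(\Gamma))\subset\widebar{W}_A$ does not follow from the stated argument.

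The step is reparable, but it needs an extra ingredient. One route: show that $\rho(\gamma_1)$ is itself proximal with the same dominant direction $\chi_A$ and the same hyperplane $W_A$ (a root of a proximal matrix is proximal), so that the closure of $\{\rho(\gamma_1)^k\eta_0\}$ is contained in $\widebar{W}_A\cup\{\chi_A\}$ together with a countable set; then $L(\rho(\Gamma))\setminus\widebar{W}_A$ is a relatively open countable subset of the perfect set $L(\rho(\Gamma))$, hence empty by a Baire argument, contradicting (CG1). Alternatively, argue as the paper does: it works with the specific point $(\gamma^-,\chi_A)$ and produces a \emph{single} auxiliary element $\gamma'\in\Gamma\setminus\langle\gamma\rangle$ with $\rho(\gamma')\chi_A\notin\widebar{W}_A$, using discreteness of $\Gamma$ to guarantee $\gamma'$ moves the bad first coordinate, and then contracts by $(\gamma^k,A^k)$; this sidesteps any density-of-a-punctured-orbit argument. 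As submitted, however, the cofiniteness claim is false and the uniqueness step has a genuine gap.
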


The relation between the sets $\M_B$ and $\M$ considered in Theorems~\ref{thm:thmA}~and~\ref{thm:thmA*} is given by
$$
\M_B = \{ \, \Gamma_\rho(u, \chi) \in Y \, | \, (u(+\infty),\chi) \in \M \,  \},
$$
where $u(+\infty)$ is the endpoint of the geodesic ray associated to $u \in T^1\H$.
\medskip 

For each vector $v \in E$, denote $\bar v \in \P^1$ its direction. Clearly the set 
$$
E(\Gamma) = \{ \, v \in E \, | \, \bar v \in L(\Gamma) \, \}
$$
is dual to $\Omega_X$ and the set 
$$
E(\M) = \{ (v,\chi) \in E \times \P^n \, | \, (\bar v,\chi) \in \M \, \}.
$$
is dual to $\M_B$.

\begin{dualtheorem} \label{thm:thmB*}
Under the assumptions of Theorem~\ref{thm:thmA}, for each pair $(v,\chi) \in E(\M)$, we have: 
 $$
\overline{\Gamma_\rho (v,\chi)} = E(\M) \; \Leftrightarrow \; \overline{\Gamma v} =  E(\Gamma).
$$ 
\end{dualtheorem}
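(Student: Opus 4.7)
The plan is to prove the two implications separately: $(\Rightarrow)$ reduces to a projection argument, while $(\Leftarrow)$ — the substantive content — requires combining the minimality of Theorem~\ref{thm:thmA*} with the fiberwise structure of $E \to \P^1$. For $(\Rightarrow)$, the projection $\pi_E \colon E \times \P^n \to E$ is continuous and $\Gamma$-equivariant (identifying $\Gamma$ with $\Gamma_\rho$ via $\gamma \mapsto (\gamma,\rho(\gamma))$), so $\pi_E\bigl(\overline{\Gamma_\rho(v,\chi)}\bigr) \subseteq \overline{\Gamma v}$. I would verify that $\pi_E(E(\M)) = E(\Gamma)$: the first projection $\pi_{\P^1}(\M) \subseteq L(\Gamma)$ (by Theorem~\ref{thm:thmA*}) is a non-empty $\Gamma$-invariant closed subset of $\P^1$, and the $\Gamma$-minimality of $L(\Gamma)$ forces $\pi_{\P^1}(\M) = L(\Gamma)$. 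Hence the hypothesis $\overline{\Gamma_\rho(v,\chi)} = E(\M)$ gives $E(\Gamma) \subseteq \overline{\Gamma v}$, while the reverse inclusion holds because $\bar v \in L(\Gamma)$ and $E(\Gamma)$ is closed and $\Gamma$-invariant.

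For $(\Leftarrow)$, assume $\overline{\Gamma v} = E(\Gamma)$ and set $F := \overline{\Gamma_\rho(v,\chi)}$. The inclusion $F \subseteq E(\M)$ is immediate because $E(\M)$ is closed and $\Gamma_\rho$-invariant. To produce the reverse, fix $(v',\chi') \in E(\M)$ and $\epsilon > 0$ and seek $\gamma \in \Gamma$ with $(\gamma v, \rho(\gamma)\chi)$ within $\epsilon$ of $(v',\chi')$. My two-stage approach is: (i) by Theorem~\ref{thm:thmA*}, find $\beta \in \Gamma$ with $(\overline{\beta v}, \rho(\beta)\chi)$ close to $(\bar v', \chi')$ in $\P^1 \times \P^n$ — so $\beta v$ has the correct direction but possibly an incorrect magnitude in the fiber of $E \to \P^1$; (ii) since $\beta v \in E(\Gamma) = \overline{\Gamma(\beta v)}$ by the hypothesis, find $\alpha \in \Gamma$ with $(\alpha\beta)\,v$ close to $v'$ in $E$, and take $\gamma := \alpha\beta$, which gives the desired $E$-approximation.

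The main obstacle is that, while $\gamma v$ is now close to $v'$, the correction $\rho(\alpha)$ applied in stage (ii) may disrupt the good $\P^n$-position established in stage (i), i.e.\ move $\rho(\beta)\chi$ away from $\chi'$. My plan is to carry out both stages along a single diagonal sequence: choose $\beta_k \in \Gamma$ with $\rho(\beta_k)\chi \to \chi'$ and $\overline{\beta_k v} \to \bar v'$, and then apply Theorem~\ref{thm:thmA*} to the pair $(\beta_k v, \rho(\beta_k)\chi) \in E(\M)$ — which lies in the $\Gamma_\rho$-orbit of $(v,\chi)$ — to obtain $\alpha_k \in \Gamma$ realizing both constraints $\alpha_k \beta_k v \approx v'$ and $\rho(\alpha_k\beta_k)\chi \approx \chi'$. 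The crux is showing that the set of candidate $\alpha$'s satisfying the first constraint is dense enough to intersect those satisfying the second; I expect this to follow from a saturation argument using the $\Gamma_\rho$-equivariance $\rho(\gamma)\,F_{v''} = F_{\gamma v''}$ of the fibers $F_{v''} := F \cap (\{v''\} \times \P^n)$ combined with the minimality of $\M$, forcing the fibers of $F$ over $E(\Gamma)$ to fill out the maximal possible sets $\{\chi'' \colon (\bar v'', \chi'') \in \M\}$.
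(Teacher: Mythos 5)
Your forward implication is fine (the paper leaves it implicit, and your projection argument, including the verification that $\M$ projects onto $L(\Gamma)$ by minimality, is correct). The gap is in the converse, and it sits exactly at the point you flag as "the crux" and then leave unresolved. Minimality of $\M$ (Theorem~\ref{thm:thmA*}) lives entirely in $\P^1 \times \P^n$: it controls directions and the $\P^n$-coordinate but says nothing about the radial coordinate of $E \to \P^1$, while the hypothesis $\overline{\Gamma v} = E(\Gamma)$ controls the point of $E$ but says nothing about the $\P^n$-coordinate. Your proposed way of coupling them --- a "saturation argument" showing that the fibers of $F = \overline{\Gamma_\rho(v,\chi)}$ over $E(\Gamma)$ "fill out the maximal possible sets $\{\chi'' : (\bar v'',\chi'') \in \M\}$" --- is not an argument but a restatement of the conclusion: $F = E(\M)$ is precisely the assertion that those fibers are maximal, and equivariance of the fibers plus minimality of $\M$ does not rule out, a priori, that the fiber of $F$ over $\lambda v''$ varies with the scalar $\lambda$. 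So the diagonal-sequence scheme in your third paragraph never gets off the ground: nothing guarantees that among the $\alpha_k$ achieving $\alpha_k\beta_k v \approx v'$ there is one keeping $\rho(\alpha_k\beta_k)\chi$ near $\chi'$.

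The missing idea, which is the actual content of the paper's proof, is a mechanism to produce inside $F$ a full ray of scalar multiples $\{(\lambda w, \chi_0) : \lambda > 0\}$ for some fixed $(w,\chi_0)$ with $\chi_0$ a dominant direction; only after that does minimality of $\M$ transport the statement to every $(v',\chi') \in E(\M)$ (your stage (i), which at that point works because the scalar is already free). Concretely, the paper first proves Lemma~\ref{lem:Gammaorbit}: using a sequence $\gamma_k$ with $\norm{\gamma_k v} \to 0$, corrected by a fixed $\gamma'$ so that $\rho(\gamma_k)\chi$ accumulates outside $\widebar{W}_A$, and then applying high powers of a proximal $A = \rho(\gamma)$ whose contraction toward $\chi_A$ (Guivarc'h's lemma) is balanced against the expansion $\lambda_\gamma^{p_k}$ on the $E$-side, it places some $(\alpha v_\gamma, \chi_A)$ in $F$. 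Then Lemma~\ref{lem:twohyperbolic} supplies two hyperbolic $\gamma_1,\gamma_2$ with proximal images whose dominant eigenvalues $\lambda_1,\lambda_2$ generate a dense subgroup of $\R^*_+$; iterating $(\gamma_1,A_1)$ and $(\gamma_2,A_2)$ on the points produced by Lemma~\ref{lem:Gammaorbit} multiplies the $E$-coordinate by $\lambda_1^p\lambda_2^q$ while fixing $\chi_{A_2}$, and density of $\langle\lambda_1,\lambda_2\rangle$ yields the ray $(\lambda v_{\gamma_2},\chi_{A_2}) \in F$ for all $\lambda > 0$. None of this radial control is recoverable from minimality of $\M$ alone, so your outline as written cannot be completed without importing these (or equivalent) ingredients.
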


\section{Proof of Theorems~A*~and~B*} \label{SthmA&B}

Let $\Gamma$ be a non-elementary Fuchsian group and $\rho : \Gamma \to \PSLn$ be a representation satisfying {\rm (CG1)} and {\rm (CG2)}. Take $(\gamma,A) \in \Gamma_\rho$ with $A$ proximal and denote $\chi_A  \in \P^n$ the dominant direction of $A$. Since $A$ has infinite order, $\gamma$ is hyperbolic or parabolic. Consider $\gamma^+ = \lim_{k \to +\infty} \gamma^k(z)$ for any $z \in \H$. 

\begin{lemma} \label{lem:key}
For any non-empty $\Gamma_\rho$-invariant closed set $F \subset \P^1 \times \P^n$, we have $(\gamma^+,\chi_A) \in F$. 
\end{lemma}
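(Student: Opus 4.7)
The plan is to produce $(\gamma^+,\chi_A)$ as a limit of iterates $(\gamma,A)^k\cdot(\xi,\chi)$ of a cleverly chosen point of $F$, exploiting that $A^k$ attracts every direction outside $\P(W_A)$ to $\chi_A$ and that $\gamma^k$ attracts every boundary point outside $\{\gamma^-\}$ to $\gamma^+$ (with the parabolic case $\gamma^+=\gamma^-$ posing no obstruction). The main tools are the unique minimality of $L(\Gamma)$ in $\P^1$ for non-elementary Fuchsian $\Gamma$ and of $L(\rho(\Gamma))$ in $\P^n$ (Proposition~\ref{prop:CG}), together with the fact that $L(\rho(\Gamma))$ is perfect, a consequence of strong irreducibility.

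Since $F$ is compact, $\pi_1(F)\subset\P^1$ is a non-empty closed $\Gamma$-invariant subset, so $L(\Gamma)\subset\pi_1(F)$; in particular $\gamma^+\in\pi_1(F)$, and we may fix $\chi_0\in\P^n$ with $(\gamma^+,\chi_0)\in F$. The orbit closure $\overline{\rho(\Gamma)\chi_0}$ is a non-empty closed $\rho(\Gamma)$-invariant subset of $\P^n$, hence contains $L(\rho(\Gamma))\ni\chi_A$; pick $\gamma_k\in\Gamma$ with $\rho(\gamma_k)\chi_0\to\chi_A$. By $\Gamma_\rho$-invariance $(\gamma_k\gamma^+,\rho(\gamma_k)\chi_0)\in F$, and after extracting a convergent subsequence of $\gamma_k\gamma^+$ in $\P^1$, closedness of $F$ yields $(\xi,\chi_A)\in F$ for some $\xi\in\P^1$.

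Because $A\chi_A=\chi_A$, iterating gives $(\gamma^k\xi,\chi_A)\in F$. If $\gamma$ is parabolic then $\gamma^k\xi\to\gamma^+$ for every $\xi$, so $(\gamma^+,\chi_A)\in F$; the same conclusion holds when $\gamma$ is hyperbolic and $\xi\neq\gamma^-$. The only remaining possibility is $\gamma$ hyperbolic with $\xi=\gamma^-$, giving $(\gamma^-,\chi_A)\in F$. To dislodge this I will seek $\gamma''\in\Gamma$ with $\gamma''\gamma^-\neq\gamma^-$ and $\rho(\gamma'')\chi_A\notin\P(W_A)$: given such a $\gamma''$, $(\gamma''\gamma^-,\rho(\gamma'')\chi_A)\in F$ by invariance, and a further iteration by $(\gamma,A)^k$ sends both coordinates to $(\gamma^+,\chi_A)$.

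The hardest step is establishing the existence of this $\gamma''$. Assume for contradiction that no such $\gamma''$ exists; with $H=\mathrm{Stab}_\Gamma(\gamma^-)$ this means $\rho(\Gamma\setminus H)\chi_A\subset\P(W_A)$, hence $\rho(\Gamma)\chi_A\subset\rho(H)\chi_A\cup\P(W_A)$. Since $\Gamma$ is non-elementary Fuchsian and $H$ fixes a boundary point, $H$ is elementary and contains $\gamma$ with finite index; using $A\chi_A=\chi_A$, the orbit $\rho(H)\chi_A$ is therefore finite. Taking closures and combining with $\overline{\rho(\Gamma)\chi_A}\supset L(\rho(\Gamma))$, we get $L(\rho(\Gamma))\subset\rho(H)\chi_A\cup\P(W_A)$, so $L(\rho(\Gamma))\setminus\P(W_A)$ is finite and contains $\chi_A$; since $\P(W_A)$ is closed and misses $\chi_A$, this forces $\chi_A$ to be an isolated point of $L(\rho(\Gamma))$. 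But $L(\rho(\Gamma))$ has no isolated points: by minimality the isolated locus is either empty or all of $L(\rho(\Gamma))$, and in the latter case $L(\rho(\Gamma))$ would be compact and discrete hence finite, contradicting strong irreducibility (which for $n\geq 1$ prohibits a finite $\rho(\Gamma)$-invariant set of lines in $\R^{n+1}$). This contradiction supplies the required $\gamma''$ and completes the proof.
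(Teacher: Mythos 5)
Your proof is correct and follows essentially the same route as the paper: obtain a point $(\xi,\chi_A)\in F$ from minimality, attract it to $(\gamma^+,\chi_A)$ with the powers $(\gamma^k,A^k)$, and in the exceptional case $\xi=\gamma^-$ first move by an auxiliary element of $\Gamma$ whose existence rests on strong irreducibility. Your contradiction argument via the full stabilizer $H$ of $\gamma^-$ (showing otherwise $L(\rho(\Gamma))$ would have an isolated point, hence be finite) is in fact a more careful justification of the step the paper dispatches by choosing $\gamma'\notin\langle\gamma\rangle$ and invoking discreteness.
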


\begin{proof}
Since $\P^1$ is compact, $F$ projects on a $\rho(\Gamma)$-invariant closed subset of $\P^n$ containing $L(\rho(\Gamma))$. It follows that there exists $\xi \in \P^1$ such that $(\xi,\chi_A) \in F$. 
If $\xi \neq \lim_{k \to +\infty} \gamma^{-k}(z)$, then 
$\lim_{k \to +\infty} \gamma^k(\xi) = \gamma^+$ and hence 
$$
 \lim_{k \to +\infty} \big(\gamma^k(\xi),\rho(\gamma^k)\chi_A \big) = \lim_{k \to +\infty}  \big(\gamma^k(\xi),A^k\chi_A \big) =  (\gamma^+,\chi_A) \in F.
$$
Otherwise, by the irreducibility condition {\rm (CG1)}, there exists $\gamma' \in \Gamma - <\gamma>$ such that $\rho(\gamma')\chi_A$ does not belong to the projection $\widebar{W}_A$ of $W_A$ into $\P^n$. Since 
$\Gamma$ is discrete, we have $\gamma'(\xi) \neq \xi$. As a consequence, we have: 
$$
\lim_{k \to +\infty} \big(\gamma^k(\gamma'(\xi)), A^k\rho(\gamma')\chi_A \big) = (\gamma^+,\chi_A) \in F. \qedhere
$$
 \end{proof}

\begin{proof}[Proof of the Theorem~\ref{thm:thmA*}]
By Lemma~\ref{lem:key}, the intersection of all  non-empty closed $\Gamma_\rho$ sets contains 
$$
\M = \overline{\{ \, (\gamma^+,\chi_A) \, | \, \gamma \in \Gamma , A = \rho(\gamma) \text{ proximal} \,\}} \subset L(\Gamma) \times L(\rho(\Gamma)).
$$
Thus $\M$ becomes the unique minimal set for the $\Gamma_\rho$-action on $\P^1 \times  \P^n$.
\end{proof}

\begin{remark}[\bf on the shape of $\M$] \label{rem:noninjective}
(1) If $\rho$ is not injective, then $\M = L(\Gamma) \times L(\rho(\Gamma))$ because $N = Ker \, \rho$ is a normal subgroup of $\Gamma$ and then $L(\Gamma) = L(N)$.
\medskip 

\noindent
(2) A similar conclusion holds if $\rho$ is indiscrete (in the sense that $\rho(\Gamma)$ is not discrete). Indeed, let $\gamma_k$ be a non-stationary sequence of elements of $\Gamma$ such that $\rho(\gamma_k) \to Id$.  Passing to a subsequence if necessary, there exist two points $\xi^-$ and $\xi^+$ in $\P^1$ such that 
$$\lim_{k \to +\infty} \gamma_k(\xi) = \xi^+$$ 
for any $\xi \neq \xi^-$ (see for example \cite[Lemma 2.2]{Alcalde-Dal'Bo}). For any $\chi \in L(\rho(\Gamma))$, take an element $(\xi,\chi) \in \M$ such that $\xi \notin \Gamma\xi^-$. Since $\Gamma$ is non elementary, such element always exists. For any $\gamma \in \Gamma$, we have: 
$$
 \lim_{k \to +\infty} \big(\gamma\gamma_k\gamma^{-1}(\xi),\rho(\gamma\gamma_k\gamma^{-1})\chi \big) = (\gamma(\xi^+),\chi). 
$$
Therefore $\Gamma_\rho \big(\gamma(\xi^+),\chi \big) \subset \M$ and hence $L(\Gamma) \times L(\rho(\Gamma)) = \M$.
\medskip 

\noindent
(3) In the opposite side, if $n=1$ and $\rho$ is the natural inclusion of $\Gamma$ into $\PSL$, then $\M = \{ \, (\xi,\xi) \, | \, \xi \in L(\Gamma)\, \}$.
\end{remark}

Two lemmas are needed to prove Theorem~\ref{thm:thmB*}:
 
\begin{lemma} \label{lem:twohyperbolic} 
Let $\Gamma$ be a non-elementary Fuchsian group and $\rho : \Gamma \to \PSLn$ be a representation satisfying {\rm (CG1)} and {\rm (CG2)}.
There are two hyperbolic elements $\gamma_1$ and $\gamma_2$ of $\Gamma$ such that
\medskip 

\noindent
(1) the dominant eigenvalues $\lambda_1$ and $\lambda_2$ generate a dense subgroup of the positive multiplicative group $\R^*_+$, 
\medskip 

\noindent
(2) $A_1= \rho(\gamma_1)$ and $A_2= \rho(\gamma_2)$ are proximal. 
\end{lemma}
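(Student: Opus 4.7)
The plan is to argue in two stages: first to produce one hyperbolic element $\gamma\in\Gamma$ with $\rho(\gamma)$ proximal, then to refine the construction to obtain a second such element whose dominant eigenvalue is logarithmically incommensurable with that of the first, so that $\langle\lambda_1,\lambda_2\rangle$ is dense in $\R^*_+$.

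For the first stage, I would fix $\gamma_0\in\Gamma$ with $A_0=\rho(\gamma_0)$ proximal, as supplied by (CG2), with dominant direction $\chi_{A_0}$ and contracting hyperplane $\widebar{W}_{A_0}$. Since $A_0$ has infinite order, $\gamma_0$ is hyperbolic or parabolic; the hyperbolic case needs no further work. If $\gamma_0$ is parabolic, pick a hyperbolic $\eta\in\Gamma$ whose axis avoids the parabolic fixed point of $\gamma_0$, which exists because $\Gamma$ is non-elementary; using strong irreducibility, modify $\eta$ if necessary so that $\rho(\eta)\chi_{A_0}\notin\widebar{W}_{A_0}$, which is possible because (CG1) prevents the orbit $\rho(\Gamma)\chi_{A_0}$ from lying in any proper projective subspace. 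For $N$ large, $\gamma_0^N\eta$ is hyperbolic by a ping-pong-style argument on $\partial\H$ (its iterates attract near the parabolic fixed point of $\gamma_0$, and its inverse iterates near $\eta^{-1}$ of that point), while $A_0^N\rho(\eta)$ is proximal by the standard contraction lemma for proximal matrices. Relabel this element as $\gamma$ and write $A=\rho(\gamma)$, $\lambda=\lambda_A$.

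For the second stage, take any $\zeta\in\Gamma$ satisfying $\rho(\zeta)\chi_A\notin\widebar{W}_A$ and $\chi_A\notin\rho(\zeta)\widebar{W}_A$; for $N$ large, $\gamma^N\zeta\gamma^N$ is hyperbolic in $\Gamma$ and $A^N\rho(\zeta)A^N$ is proximal, with the classical asymptotic
\[
\log\lambda\bigl(A^N\rho(\zeta)A^N\bigr)=2N\log\lambda+\log c(\zeta)+o(1)\qquad(N\to\infty),
\]
where $c(\zeta)\neq 0$ is an explicit pairing involving $\chi_A$, the dual attracting linear functional of $A$, and $\rho(\zeta)$. It thus suffices to exhibit $\zeta$ with $\log c(\zeta)\notin\mathbb{Q}\log\lambda$: then, for $N$ large enough that the $o(1)$ error does not restore rational dependence, the pair $\gamma_1=\gamma$, $\gamma_2=\gamma^N\zeta\gamma^N$ meets all three requirements.

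The main obstacle is this final exclusion of arithmetic degeneracy. One route is to observe that for two admissible $\zeta,\zeta'$ the ratio $c(\zeta)/c(\zeta')$ is a quotient of values of a fixed non-zero linear form evaluated on the orbit $\rho(\Gamma)\chi_A$; if all $\log c(\zeta)$ lay in $\mathbb{Q}\log\lambda$, these quotients would be confined to the countable set $\exp(\mathbb{Q}\log\lambda)\subset\R^*_+$, which cannot accommodate the whole orbit once strong irreducibility is taken into account. Alternatively, the conclusion follows at once from the non-arithmeticity of the Jordan projection of a strongly irreducible proximal subgroup, in the spirit of \cite{Conze-Guivarc'h}. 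Stages~1 and~2 are routine ping-pong and contraction arguments; it is the density assertion that genuinely depends on (CG1).
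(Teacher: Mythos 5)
You have misidentified which eigenvalues the lemma is about, and this matters for how the lemma is used. In the paper, $\lambda_1$ and $\lambda_2$ are the dominant eigenvalues of the hyperbolic elements $\gamma_1,\gamma_2$ themselves, viewed as elements of $\PSL$ acting on $E=\{\pm Id\}\bs \R^2-\{0\}$ (equivalently, exponentials of half the translation lengths on $\H$); this is exactly what is needed in the proof of Theorem~B*, where $(\gamma_i^p,A_i^p)$ multiplies the vector $v_{\gamma_i}\in E$ by $\lambda_i^p$ while fixing $\chi_{A_i}$, so that density of $\{\lambda_1^p\lambda_2^q\}$ in $\R^*_+$ is the whole point. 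Your construction instead controls the dominant eigenvalues of the matrices $A=\rho(\gamma)$ and $A^N\rho(\zeta)A^N$ in $\SLn$, which is a different (and, for the paper's purposes, irrelevant) quantity. The paper's actual proof decouples the two requirements: by Benoist and Guivarc'h, (CG1)+(CG2) give two elements $A_1,A_2\in\rho(\Gamma)$ generating a free group \emph{all} of whose nontrivial elements are proximal, so condition (2) survives under replacing $\gamma_1,\gamma_2$ by words in them; then a Dal'Bo-type non-arithmeticity argument for the length spectrum of the non-elementary Fuchsian group $\Gamma$ lets one choose such words that are hyperbolic with $\log\lambda_1/\log\lambda_2$ irrational. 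Your two-stage construction has no mechanism producing this free-group (ping-pong) reservoir of proximal images, which is what makes the replacement step harmless.

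Even on your own reading, the density step is not closed. From $\log\lambda(A^N\rho(\zeta)A^N)=2N\log\lambda+\log|c(\zeta)|+\epsilon_N$ with $\epsilon_N\to 0$, you cannot pick ``$N$ large enough that the $o(1)$ error does not restore rational dependence'': the set $\mathbb{Q}\log\lambda$ is dense in $\R$, so arbitrarily small errors can land every term of the sequence back in it, and no contradiction follows from knowing $\log|c(\zeta)|\notin\mathbb{Q}\log\lambda$. The first fallback argument also fails as stated: the orbit $\rho(\Gamma)\chi_A$ is countable, so confining the values $c(\zeta)$ (or their quotients) to the countable set $\exp(\mathbb{Q}\log\lambda)$ is not by itself absurd, and strong irreducibility alone does not rule it out. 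The second fallback --- non-arithmeticity of the spectrum of a strongly irreducible proximal subgroup --- is a genuine theorem, but invoking it is essentially assuming the hard part of your stage 2, and it would still only give density for the $\SLn$-eigenvalues rather than for the eigenvalues of $\gamma_1,\gamma_2$ in $\PSL$ that the lemma requires.
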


\begin{proof} 
Under conditions {\rm (CG1)} and {\rm (CG2)}, the group $\rho(\Gamma)$ contains two elements $A_1$ and $A_2$ which generate a non-abelian free group containing only proximal elements (see \cite[Lemma 3.9]{Benoist1997} and \cite[Lemma 3]{Guivarc'h1990}). Let  $\gamma_1$ and $\gamma_2$ be two elements of $\Gamma$ such that $\rho(\gamma_1)=A_1$ and $\rho(\gamma_2)=A_2$. Reasoning as in  \cite{Dal'Bo1999}, we can replace $\gamma_1$ and $\gamma_2$ with two hyperbolic elements of $\Gamma$ whose dominant eigenvalues $\lambda_1$ and $\lambda_2$ generate a dense subgroup of $\R^*_+$.
\medskip 
\end{proof}

For each hyperbolic element $\gamma$ of $\Gamma$, we denote $v_\gamma$ the unit eigenvector in $E$ associated to dominant eigenvalue $\lambda_\gamma$. Clearly $v_\gamma \in E(\Gamma)$ since its direction $\bar v_\gamma = \gamma^+ \in L(\Gamma)$. From Theorem~\ref{thm:thmA*}, it follows that
$$
E(\M) \subset E(\Gamma) \times L(\rho(\Gamma)).
$$

\begin{lemma}  \label{lem:Gammaorbit}
Let $(v,\chi) \in E(\M)$ such that $\overline{\Gamma v} = E(\Gamma)$. For any hyperbolic element $\gamma \in \Gamma$ such that $A = \rho(\gamma)$ is proximal, there exists $\alpha \in \R^*$ such that 
$$(\alpha v_\gamma, \chi_A) \in \overline{\Gamma_\rho(v,\chi)}.$$
\end{lemma}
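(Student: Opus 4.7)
The plan is to construct a sequence in $\Gamma_\rho(v,\chi)$ whose limit is of the form $(\alpha v_\gamma,\chi_A)$, combining the density given by Lemma~\ref{lem:key} with the hypothesis $\overline{\Gamma v}=E(\Gamma)$ and rescaling by iterates of $\gamma$.

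First, since $(\bar v,\chi)\in\M$ and $\M$ is $\Gamma_\rho$-minimal (Theorem~\ref{thm:thmA*}), its orbit is dense in $\M$; and by Lemma~\ref{lem:key} applied to $\M$, the point $(\gamma^+,\chi_A)=(\bar v_\gamma,\chi_A)$ belongs to $\M$. So there exists $\sigma_k\in\Gamma$ with $(\overline{\sigma_k v},\rho(\sigma_k)\chi)\to(\bar v_\gamma,\chi_A)$ in $\P^1\times\P^n$. Passing to a subsequence, $|\sigma_k v|$ tends to some $r\in[0,+\infty]$. If $r\in(0,+\infty)$, then $\sigma_k v\to\alpha v_\gamma$ in $E$ for some $\alpha\in\R^*$ and closedness of $F:=\overline{\Gamma_\rho(v,\chi)}$ finishes the argument. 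If $r=0$, I would rescale by $\gamma^{n_k}$: writing $\sigma_k v=\alpha_k v_\gamma+\beta_k v_{\gamma^{-1}}$ in the eigenbasis of $\gamma$, the direction convergence forces $\beta_k/\alpha_k\to 0$ and $|\alpha_k|\to 0$. Choosing $n_k\to+\infty$ with $\lambda_\gamma^{n_k}\alpha_k\to 1$, the identity $\lambda_\gamma^{-n_k}\beta_k=(\beta_k/\alpha_k)\,(\lambda_\gamma^{n_k}\alpha_k)\,\lambda_\gamma^{-2n_k}$ makes the $v_{\gamma^{-1}}$-component vanish, so $\gamma^{n_k}\sigma_k v\to v_\gamma$; meanwhile, since $\rho(\sigma_k)\chi$ eventually lies in a neighborhood of $\chi_A$ disjoint from $\widebar{W}_A$ on which $A^n$ contracts uniformly to $\chi_A$, the $\chi$-coordinate $A^{n_k}\rho(\sigma_k)\chi$ also tends to $\chi_A$, yielding $(v_\gamma,\chi_A)\in F$.

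The main obstacle is the case $r=+\infty$, where the symmetric rescaling by $\gamma^{-n_k}$ may blow up the $v_{\gamma^{-1}}$-component and, worse, pushes $\rho(\sigma_k)\chi$ away from $\chi_A$ (which is a repelling fixed point of $A^{-1}$ in $\P^n$). I would bypass this by invoking the density hypothesis $\overline{\Gamma v}=E(\Gamma)$ directly: for each integer $m\geq 1$, pick $\tau_m\in\Gamma$ with $\tau_m v$ close to $\lambda_\gamma^{-m}v_\gamma$, producing a point $(\lambda_\gamma^{-m}v_\gamma,\chi^{(m)})\in F$ for some $\chi^{(m)}\in\P^n$. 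Iterating by $\gamma^m$ restores the first coordinate to $v_\gamma$, and a diagonal argument extracting a subsequence $\chi^{(m)}\to\chi_*$ in $\P^n$ reduces the problem to ensuring $\chi_*\notin\widebar{W}_A$, whence $A^m\chi^{(m)}\to\chi_A$ by the proximal dynamics of $A$. This non-degeneracy condition on $\chi_*$ is the crux of the argument and should follow from strong irreducibility (CG1) of $\rho(\Gamma)$, which prevents the closed $A$-invariant collection of such limit points from being trapped inside the proper projective subspace $\widebar{W}_A$.
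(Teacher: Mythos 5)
Your cases $r=0$ and $r\in(0,+\infty)$ are correct and essentially reproduce the paper's mechanism (renormalize the vector by powers of $\gamma$, contract the projective coordinate by powers of $A$ using a neighbourhood of $\chi_A$ disjoint from $\widebar{W}_A$); in fact choosing $\sigma_k$ so that $\rho(\sigma_k)\chi\to\chi_A$ is a clean way to guarantee the second coordinate starts in the basin of $\chi_A$. The problem is the case $r=+\infty$, which your strategy cannot avoid, and there your argument has a genuine gap at exactly the point you flag as the crux. From the density $\overline{\Gamma v}=E(\Gamma)$ you obtain points $(\lambda_\gamma^{-m}v_\gamma,\chi^{(m)})\in F$ with no control whatsoever on $\chi^{(m)}$; these $\chi^{(m)}$ lie in the fibres of $F$ over the line $\R v_\gamma$, and the closure of this set of fibre points is invariant only under the cyclic group generated by $A$ (the stabilizer in $\Gamma$ of the direction $\gamma^+$ is generated by $\gamma$ up to torsion), not under $\rho(\Gamma)$. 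Strong irreducibility (CG1) forbids proper subspaces invariant under a finite-index subgroup of $\rho(\Gamma)$; it says nothing about closed $\langle A\rangle$-invariant subsets of $\widebar{W}_A$, which exist in abundance. So the assertion that $\chi_*\notin\widebar{W}_A$ ``should follow from (CG1)'' is not justified, and the stated mechanism is not the right one.

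The missing idea, which is how the paper closes this very loophole, is to correct the sequence by a single fixed auxiliary element: starting from $\gamma_k\in\Gamma$ with $\norm{\gamma_k v}\to 0$ (your situation after approximating $\lambda_\gamma^{-m}v_\gamma$), if the limit $\chi_0$ of $\rho(\gamma_k)\chi$ lies in $\widebar{W}_A$, replace $\gamma_k$ by $\gamma'\gamma_k$ for a fixed $\gamma'\in\Gamma$ chosen so that $\rho(\gamma')\chi_0\notin\widebar{W}_A$ (such $\gamma'$ exists by irreducibility, since otherwise the span of the $\rho(\Gamma)$-orbit of a lift of $\chi_0$ would be a proper invariant subspace contained in $W_A$), while also arranging that the $v_\gamma$-component $a_k$ of $\gamma'\gamma_k v$ is nonzero; this left multiplication does not affect $\norm{\gamma'\gamma_k v}\to 0$. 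Only after this correction can one choose exponents $p_k$ with $\lambda_\gamma^{p_k}a_k\to\alpha\neq 0$ and invoke Guivarc'h's uniform contraction of $A^N$ near $\chi_A$ to get $A^{p_k}\rho(\gamma'\gamma_k)\chi\to\chi_A$. Without this step (or an equivalent substitute), your $r=+\infty$ branch, and hence the proof, is incomplete.
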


\begin{proof}
Assuming $\overline{\Gamma v}= E(\Gamma)$, there exists a sequence of elements $\gamma_k \in \Gamma$ such that the norms $\norm{\gamma_k v}$ converge to $0$ as $k \to +\infty$. Since $\Gamma$ is non elementary and $\rho(\Gamma)$ is irreducible, replacing $\gamma_k$ by $\gamma'\gamma_k$ for some $\gamma' \in \Gamma$, up to take a subsequence, we can suppose: 
\medskip 

\noindent
(1) $\gamma_kv = a_k v_\gamma + b_k v_{\gamma^{\tiny -1}}$ where $a_k \neq 0$ for any $k$, 
\medskip 

\noindent
(2) $\rho(\gamma_k)\chi \to \chi_0 \notin \widebar{W}_A$. 
\medskip 

\noindent
Let $p_k$ an increasing sequence of integers converging to $+\infty$ such that 
$\lambda_\gamma^{p_k}a_k$ converges to some $\alpha \neq 0$. Then we have 
$\gamma^{p_k}\gamma_kv \to \alpha v_\gamma$. Let us prove that 
\begin{equation} \label{eq:convergence1}
A^{p_k}\rho(\gamma_k) \chi \to \chi_A.
\end{equation}
Since $\chi_0 \notin \widebar{W}_A$, there exist an open neighbourhood $V(\chi_A)$ of $\chi_A$ containing $\chi_0$, an integer $N \gg 0$ and a constant $0 < c < 1$ satisfying \cite[Lemma 3]{Guivarc'h1990}:
\medskip 

\noindent
i) $A^{Nk}(V(\chi_A)) \subset V(\chi_A)$ for all $k \geq 0$, 
\medskip 

\noindent
iii) 
$\delta(A^{Nk}\chi_1,A^{Nk}\chi_2) \leq c^k \delta (\chi_1,\chi_2)$
for all $\chi_1, \chi_2 \in V(\chi_A)$ and for all $k \geq 0$, 
\medskip 

\noindent
For $k \geq 0$ large enough, we have $\rho(\gamma_k)\chi \in V(\chi_A)$. Assuming $p_k = Nq_k + r_k$ with $0 \leq r_k < N$, the inequality
$$
\delta(A^{Nq_k}\rho(\gamma_k)\chi,\chi_A) \leq c^{q_k} \delta(\rho(\gamma_k)\chi, \chi_A)
$$
implies 
$$
 \lim_{k \to +\infty} \delta(A^{Nq_k}\rho(\gamma_k)\chi,\chi_A) = 0
$$
and hence 
$$
 \lim_{k \to +\infty} \delta(A^{p_k}\rho(\gamma_k)\chi,\chi_A) = 0.
$$
This proves \eqref{eq:convergence1}. Finally, we deduce:
$$
 \lim_{k \to +\infty} \big( \gamma^{p_k}\gamma_kv,A^{p_k}\rho(\gamma_k)\chi\big) = (\alpha v_\gamma,\chi_A) \in \overline{\Gamma_\rho(v,\chi)}. \qedhere
$$
\end{proof}

\begin{proof}[Proof of the Theorem~\ref{thm:thmB*}]
Let $(v,\chi) \in E(\M)$ with $\overline{\Gamma v} = E(\Gamma)$. Take $\gamma_1, \gamma_2 \in \Gamma$ given by Lemma~\ref{lem:twohyperbolic} and its images $A_1 = \rho(\gamma_1)$ and $A_2 = \rho(\gamma_2)$. Applying Lemma~\ref{lem:Gammaorbit}, there exists a real number $\alpha_1 \neq 0$ such that 
$(\alpha_1 v_{\gamma_1}, \chi_{A_1}) \in \overline{\Gamma_\rho(v,\chi)}$
and hence
\begin{equation} \label{eq:product1}
(\alpha_1 \lambda_1^p v_{\gamma_1}, \chi_{A_1}) \in \overline{\Gamma_\rho(v,\chi)}
\end{equation}
for any $p \in \Z$. Since $\overline{\Gamma v}_{\gamma_1} = E(\Gamma)$ \cite[Theorem V.3.1]{Dal'Bobook}, by the same argument, we obtain another real number $\alpha_2 \neq 0$ such that 
\begin{equation} \label{eq:product2}
(\alpha_1 \alpha_2 \lambda_1^p \lambda_2^q v_{\gamma_2}, \chi_{A_2}) \in \overline{\Gamma_\rho(v,\chi)}
\end{equation}
for any pair $p,q \in \Z$.  As $\lambda_1$ and $\lambda_2$ generate a dense subgroup of $\R^*_+$ by Lemma~\ref{lem:twohyperbolic}, we deduce from \eqref{eq:product1} and \eqref{eq:product2} that
$$
(\lambda v_{\gamma_2}, \chi_{A_2}) \in \overline{\Gamma_\rho(v,\chi)}
$$
for any $\lambda > 0$. 
\medskip 

For any $(v',\chi') \in E(\M)$, since $(\bar v', \chi')$ and $(\gamma_2^+,\chi_{A_2})$ belong to the minimal set $\M$, there exists a sequence $\gamma_k \in \Gamma$ such that 
$$
\gamma_k \gamma_2^+ \to \bar v' \quad \text{and} \quad \rho(\gamma_k)\chi_{A_2} \to \chi'.
$$
It follows there exists a sequence $\lambda_k \in \R$ such that 
\begin{equation} \label{eq:convergence2}
\lambda_k \gamma_k v_{\gamma_2} \to v'. 
\end{equation}
As $(\lambda_k v_{\gamma_2}, \chi_{A_2}) \in \overline{\Gamma_\rho(v,\chi)}$, we have $(v',\chi') \in \overline{\Gamma_\rho(v,\chi)}$. 
\end{proof}

We deduce from Theorem~\ref{thm:thmB*} that $E(\M)$ is a non-empty minimal  $\Gamma_\rho$-invariant closed set if and only if $E(\Gamma)$ is a minimal $\Gamma$-invariant closed set. Since this condition is satisfied if and only if $\Gamma$ is convex-compact \cite[Proposition V.4.3]{Dal'Bobook}, we retrieve Corollary~\ref{cor:finitelygenerated}:

\begin{corollary} 
The set $\M_B$ is $U$-minimal if and only if
 $\Gamma$ is convex-cocompact 
\end{corollary}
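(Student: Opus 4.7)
My plan is to reduce the $U$-minimality of $\M_B$ to the $U$-minimality of $\Omega_X$ via Theorem~\ref{thm:thmB}, and then to invoke the classical characterization of convex-cocompactness in terms of $U$-minimality of $\Omega_X$.

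First I would verify that $\pi(\M_B) = \Omega_X$. Since $\pi : Y \to X$ has compact fibres $\P^n$, it is proper, so $\pi(\M_B)$ is closed in $X$. It is non-empty and $B$-invariant, because $\pi$ is $B$-equivariant and $\M_B$ is $B$-invariant; hence, as $\Omega_X$ is the unique non-empty $B$-minimal closed subset of $X$, it is contained in every non-empty closed $B$-invariant subset, which gives $\Omega_X \subset \pi(\M_B)$. For the reverse inclusion, the description
$$\M_B = \{\, \Gamma_\rho(u,\chi) \in Y \,|\, (u(+\infty),\chi) \in \M \,\}$$
combined with $\M \subset L(\Gamma) \times L(\rho(\Gamma))$ from Theorem~\ref{thm:thmA*} forces $u(+\infty) \in L(\Gamma)$ for every $y = \Gamma_\rho(u,\chi) \in \M_B$, and this is exactly the condition for $\pi(y) = \Gamma u$ to lie in $\Omega_X$.

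With $\pi(\M_B) = \Omega_X$ in hand, Theorem~\ref{thm:thmB} gives, for every $y \in \M_B$, the equivalence $\overline{yU} = \M_B \Leftrightarrow \overline{\pi(y)U} = \Omega_X$. Since the restriction $\pi|_{\M_B}$ surjects onto $\Omega_X$, this at once yields
\[ \M_B \text{ is } U\text{-minimal} \iff \Omega_X \text{ is } U\text{-minimal}. \]
The right-hand side holds if and only if $\Gamma$ is convex-cocompact: this is \cite[Proposition V.4.3]{Dal'Bobook}, transported through the duality between the $U$-action on $X$ and the $\Gamma$-action on $E$ that was already used in the excerpt. The only substantive step is the identification $\pi(\M_B) = \Omega_X$, and given the explicit form of $\M_B$ supplied by Theorem~\ref{thm:thmA*}, this is essentially formal, so I do not anticipate any serious obstacle.
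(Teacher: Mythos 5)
Your proof is correct and follows essentially the same route as the paper: the paper deduces the corollary in one line from the dual statement (Theorem~\ref{thm:thmB*}), reducing minimality of $E(\M)$ to minimality of $E(\Gamma)$ and citing \cite[Proposition V.4.3]{Dal'Bobook}, which is exactly your reduction of $U$-minimality of $\M_B$ to $U$-minimality of $\Omega_X$ transported to the bundle side via Theorem~\ref{thm:thmB}. The only difference is that you make explicit the surjectivity $\pi(\M_B)=\Omega_X$ (which the paper uses implicitly, and which indeed follows from properness of $\pi$, $B$-equivariance and the $B$-minimality of $\Omega_X$), so there is nothing to correct.
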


More generally, since $\P^n$ is compact, any non-empty minimal $\Gamma_\rho$-invariant closed subset $F \subset E \times \P^n$ projects onto a non-empy minimal $\Gamma$-invariant closed subset $p_1(F) \subset E$. If $\Gamma$ is finitely generated, then either $F$ projects onto a closed $\Gamma$-orbit or $F = E(\M)$ and $\Gamma$ is convex-compact \cite[Theorem V.4.1]{Dal'Bobook}. On the contrary, if $\Gamma$ is not finitely generated, there exist examples where $E(\Gamma)$ does not admit any non-empty minimal $\Gamma$-invariant closed subset \cite{Kulikov,MatsumotoMinimal}.

\begin{corollary} 
There exist infinitely generated Fuchsian groups $\Gamma$ such that for any representations $\rho : \Gamma \to \PSLn$ satisfying conditions {\rm (CG1)} and {\rm (CG2)}, the projective bundle $Y$ does not admit any non-empty $U$-minimal subset of $\pi^{-1}(\Omega_X)$. 
\end{corollary}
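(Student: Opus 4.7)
The plan is to argue by contradiction, reducing the nonexistence of a $U$-minimal set in $\pi^{-1}(\Omega_X)$ to the nonexistence of a $\Gamma$-minimal closed subset of $E(\Gamma)$, and then invoking the Kulikov/Matsumoto examples cited just above the statement. The dual formulation makes the reduction transparent, so the proposal is essentially to run the duality dictionary in the opposite direction from how it is used in Theorems A* and B*.

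First, I would fix an infinitely generated Fuchsian group $\Gamma$ for which $E(\Gamma)$ contains no non-empty $\Gamma$-minimal closed subset; the existence of such $\Gamma$ is exactly what is recorded via the references \cite{Kulikov,MatsumotoMinimal} in the paragraph preceding the corollary. Then, given any representation $\rho : \Gamma \to \PSLn$ satisfying (CG1) and (CG2), I would suppose for contradiction that there is a non-empty $U$-minimal set $\M_U \subset \pi^{-1}(\Omega_X) \subset Y$.

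Next, I would pass to the dual picture. The identification $\PSL/U \cong E$ is $\Gamma$-equivariant, so the quotient $Y/U = \Gamma_\rho \bs (E \times \P^n)$ puts closed $U$-invariant subsets of $Y$ in bijection with closed $\Gamma_\rho$-invariant subsets of $E \times \P^n$; minimality is preserved by this bijection. Under this correspondence the fibration $\pi$ is the projection $E \times \P^n \to E$, so $\pi^{-1}(\Omega_X)$ corresponds to $E(\Gamma) \times \P^n$. Therefore $\M_U$ gives rise to a non-empty $\Gamma_\rho$-minimal closed subset $F \subset E(\Gamma) \times \P^n$.

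Finally, I would use the standard compactness argument recalled just before the statement of the corollary: since $\P^n$ is compact, the first projection $p_1(F) \subset E(\Gamma)$ is a non-empty $\Gamma$-minimal closed subset (any non-empty closed $\Gamma$-invariant $K \subset p_1(F)$ pulls back to $F \cap p_1^{-1}(K)$, which is a non-empty closed $\Gamma_\rho$-invariant subset of $F$, hence equal to $F$ by minimality, forcing $K = p_1(F)$). This contradicts the choice of $\Gamma$, completing the proof. The main (and only) genuine difficulty is importing the Kulikov/Matsumoto construction: one must be sure these infinitely generated examples live inside $\PSL$ and that the minimality obstruction there is for the $\Gamma$-action on $E(\Gamma)$ in the sense used here; assuming the references supply that in the stated form, everything else is routine duality and a one-line compactness lemma.
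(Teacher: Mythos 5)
Your proposal is correct and follows essentially the same route as the paper: dualize $U$-invariant closed subsets of $\pi^{-1}(\Omega_X)$ to $\Gamma_\rho$-invariant closed subsets of $E(\Gamma)\times\P^n$, use compactness of $\P^n$ to see that a minimal such set projects to a non-empty $\Gamma$-minimal closed subset of $E(\Gamma)$, and contradict the Kulikov--Matsumoto examples of infinitely generated Fuchsian groups for which $E(\Gamma)$ has no minimal subset. Your explicit one-line verification that $p_1(F)$ is minimal is exactly the compactness remark the paper invokes, so no further comment is needed.
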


\section{Proof of Theorem~\ref{thm:thmC}} \label{SthmC}

In this section, we restrict our attention to the space
$$
Y_{prox} = \Gamma_\rho \bs \PSL \times L(\rho(\Gamma)).
$$ 
This space is a $\PSL$-invariant closed subset of $Y$ for which the induced $\PSL$-action is minimal. From a geometrical point of view, $Y_{prox}$ is the unit tangent bundle to a minimal lamination by hyperbolic surfaces. Intersecting with $\pi^{-1}(\Omega_X)$, we obtain a $B$-invariant closed set 
$$
\Omega_{prox} = Y_{prox} \cap \pi^{-1}(\Omega_X) 
$$ such that: 
\medskip 

\noindent
(i) $\Omega_{prox}$ is included in the non-wandering set for the $U$-action on $Y_{prox}$, 
\medskip 

\noindent
(ii) $\Omega_{prox}$ inherits from $Y$ a natural structure of $L(\rho(\Gamma))$-fibre bundle over $\Omega_X$ with projection $\pi  : \Omega_{prox} \to \Omega_X$. 
\medskip 

\noindent
By duality, $U$-orbits in $\Omega_{prox}$ are in one-to-one correspondance with $\Gamma_\rho$-orbits in $E(\Gamma) \times L(\rho(\Gamma)$. Note that $\M_B \subset \Omega_{prox}$ is the unique non-empty minimal $B$-invariant closed subset of $\Omega_{prox}$. 
\medskip 

We also add a new condition on the representation $\rho : \Gamma \to \PSLn$, which we call \emph{Nielsen's condition}: 
\medskip

\noindent
{\rm (N)} there exists a continuous map  
$$
\varphi : L(\Gamma) \to L(\rho(\Gamma)),
$$
called \emph{limit map}, such that $\varphi \scirc \gamma = \rho(\gamma) \scirc \varphi$ for all $\gamma \in \Gamma$. 
\medskip 

\noindent
Conditions {\rm (CG1)}, {\rm (CG2)} and {\rm (N)} imply $\rho$ is discrete injective and $\varphi$ is surjective. 
\medskip 

A wide familiy of representations $\rho$ satisfying conditions {\rm (CG1)}, {\rm (CG2)} and {\rm (N)} can be find in the litterature: for $\rho(\Gamma) \subset SO(n,1)$ see \cite{Tukia1995} and for $\rho(\Gamma) \subset SL(n+1,R)$ Anosov see \cite{Labourie2006}. In general, even if $\Gamma$ is finitely generated, $\varphi$ is not  necessarily injective. This is the case for example if $\gamma$ is hyperbolic and $\rho(\gamma)$ is parabolic \cite{Tukia1988}. One of the most surprising examples is a discrete faithful representation $\rho : \Gamma \to SO(3,1)$ of a torsion-free cocompact Fuchsian group $\Gamma$ that gives raise to sphere-filling map $\varphi : S^1 \to S^2$ called the \emph{Cannon-Thurston map} \cite{CannonThurston}.

\begin{proof}[Proof of the Theorem~\ref{thm:thmC}]
Assume $\Gamma$ is non-elementary and $\rho : \Gamma \to \PSLn$ satisfies conditions {\rm (CG1)}, {\rm (CG2)} and {\rm (N)}. 
Under condition {\rm (N)}, we can immediately deduce the two following facts: 
\medskip 

\noindent
(i) the graph of map $\varphi$ is a non-empty $\Gamma_\rho$-invariant closed subset of $\P^1 \times \P^n$,
\medskip 

\noindent
(ii) the map $\varphi$ define a continuous section $\Phi : \Omega_X \to \Omega_{prox}$ given by 
$$
\Phi(\Gamma u ) = \Gamma_\rho (u,\varphi(u(+\infty))).
$$

\noindent
(1) The unique $B$-minimal set $\M_B$ is given by 
$$
\M_B = \Phi(\Omega_X) = \{ \, \Gamma_\rho(u, \varphi(u(+\infty))) \in Y \, | \, u \in \PSL, u(+\infty) \in L(\Gamma) \, \}.
$$
Indeed, by Theorem~\ref{thm:thmA*}, we know that the unique $\Gamma$-minimal set $\M \subset \P^1 \times \P^n$ coincides with the graph of $\varphi$. Now our statement follows by duality. 
\medskip 

\noindent
(2) The unique $B$-minimal set $\M_B$ is a $U$-attractor relative to $\Omega_{prox}$. Indeed,  take $y = \Gamma_\rho(u,\chi) \in \Omega_{prox}$
and assume $h_{s_k}(y) \to y'$ for some sequence $s_k \to \pm \infty$. Since $\Omega_{prox}$ is $U$-invariant, $y' = \Gamma_\rho(u',\chi')$ with $u'(+\infty) \in L(\Gamma)$ and  $\chi' \in L(\rho(\Gamma))$. As $y \in \M_B$ implies $y' \in \M_B$, the proof reduces to the case where $\chi = \varphi (\xi) \neq \varphi(u(+\infty))$. By construction, there exists a sequence $\gamma_k \in \Gamma$ such that 
$$
\gamma_k u \matriz{1}{s_k}{0}{1} \to u' \quad \text{and} \quad \rho(\gamma_k)\chi \to \chi'
$$
Let us return to the hyperbolic point of view, identifying $\PSL$ with the unit tangent bundle $T^1\H$. In this model, each element $u\in \PSL$ identifies with $u = (u(0),\vec{u}) \in T^1 \H$ where $u(0)$ is a point of $\H$ and $\vec{u}$ is a unit tangent vector to $\H$ at $u(0)$. 
Denoting $B_{u(+\infty)}(i,u(0))$ the Busemann cocycle centred at $u(+\infty)$ and calculated at $i$ and $u(0)$, we have the following conditions \cite[Chapter V]{Dal'Bobook}: 
\medskip 

\noindent
(a) $\gamma_k(u(+\infty)) \to u'(+\infty)$, 
\medskip 

\noindent
(b) $B_{\gamma_k(u(+\infty))}\big(i,\gamma_k (u(0))\big) \to B_{u'(+\infty)}\big(i,u'(0)\big)$, 
\medskip 

\noindent
(c) $\rho(\gamma_k)\chi \to \chi'$.
\medskip 

\noindent
Properties (a) and (b) imply  
$$
\lim_{k \to +\infty} \gamma_k(u(0)) = u'(+\infty). 
$$
Since 
$$
 B_{\gamma_k(u(+\infty))}\Big(i,\gamma_k\big(u(0)\big)\Big) = 
B_{u(+\infty)}\big(\gamma_k^{-1}(i),u(0)\big), \medskip
$$
applying again Property (b), we deduce:
$$
\lim_{k \to +\infty} \gamma_k^{-1}(i) = u(+\infty).
$$
As a consequence, since $\xi \neq u(+\infty)$, we have (see \cite[Lemma 2.2]{Alcalde-Dal'Bo}): 
$$
\gamma_k(\xi) \to u'(+\infty).
$$
By continuity of $\varphi$, it follows:
$$
 \rho(\gamma_k)\chi = \varphi(\gamma_k(\xi)) \to \varphi(u'(+\infty)). \medskip
$$
Property (c) implies $\chi' = \varphi(u'(+\infty))$ and hence $y'  \in \M_B$. 
\end{proof}

 
\begin {remark} Example~\ref{ex:lorentz} shows that we cannot expect $\M_B$ to be a global $U$-attractor in general. This is why we introduced the laminated space $Y_{prox}$. 
\end{remark}

\begin{proof}[Proof of the Corollary~\ref{cor:onetoone}] 
Let $m$ be a $U$-invariant (non necessarily finite) Radon measure on $\Omega_{prox}$. If $m$ is conservative, then Poincar\'e's Recurrence Theorem (see \cite[Theorem 1.1.5]{Aaronson} for the discrete version) implies that the set of $U$-recurrent points 
$$
\mathcal{R}_{prox} = \{ \, y \in \Omega_{prox}  \, | \, \exists \, s_n \to +\infty \, : \; h_{s_n}(y) \to y \,  \}
$$
has full-measure, that is, $m(\Omega_{prox} - \mathcal{R}_{prox}) = 0$. Since $\M_B$ is a $U$-attractor relative to $\mathcal{R}_{prox}$, then $\mathcal{R}_{prox} \subset \M_B$ and therefore $m(\Omega_{prox} - \M_B) = 0$. As the continuous section $\Phi$ sends homeomorphically $\Omega_X$ onto $\M_B$ and $m$ is supported by $\M_B$, the push-forward $\mu = \pi_\ast m$ is a $U$-invariant measure $\mu$ on $\Omega_X$. It is also conservative and verifies $\Phi_\ast \mu = m$. Finally $m$ is ergodic if and only if $\mu$ is ergodic. 
\end{proof}

If $\Gamma$ is finitely generated, as we recall in the introduction, any ergodic $U$-invariant measure $\mu$ on $\Omega_X$ either is supported by a closed orbit or is equal to the Burger-Roblin measure \cite{Burger,Roblin} up to a multiplicative constant. In the last case, $\mu$ is conservative, so Corollary~\ref{cor:uniqerg} follows from Corollary~\ref{cor:onetoone}. Namely, under the conditions of Theorem~\ref{thm:thmC} and assuming that $\Gamma$ is finitely generated, there is a unique conservative ergodic $U$-invariant Radon measure $m$ on $\Omega_{prox}$ (defined up to a multiplicative constant and supported by the unique $U$-minimal set $\M_B$ in $\Omega_{prox}$) if and only if $\Gamma$ is convex-cocompact. In particular, there is a unique $U$-invariant probability measure $m$ on $Y_{prox}$ if and only if $\Gamma$ is cocompact. 
Notice that the unique $U$-invariant probability measure on $Y$ (which is obtained by lifting the Haar measure) in \cite[Corollary~2.4]{Bonatti-Eskin-Wilkinson} is supported by $Y_{prox}$. In the counterexample constructed by S. Matsumoto \cite{MatsumotoWeak} on a 3-dimensional compact solvmanifold, there is a unique $B$-invariant probability measure $m$ supported by the unique $B$-minimal set $\M_B$, but there are uncountable many $U$-invariant probability measures (specifically, the ergodic components of $m$) supported by uncountable many $U$-minimal sets.


\begin{thebibliography}{10}

\bibitem{Aaronson}
Jon Aaronson, \emph{An {I}ntroduction to {I}nfinite {E}rgodic {T}heory},
  Mathematical Surveys and Monographs, vol.~50, American Mathematical Society,
  1997. \MR{1450400}

\bibitem{Alcalde-Dal'Bo}
Fernando Alcalde~Cuesta and Fran\c{c}oise Dal'Bo, \emph{Remarks on the dynamics
  of the horocycle flow for homogeneous foliations by hyperbolic surfaces},
  Expo. Math. \textbf{33} (2015), no.~4, 431--451. \MR{3431475}

\bibitem{Benoist1997}
Yves Benoist, \emph{Sous-groupes discrets des groupes de {L}ie}, European
  Summer School in Group Theory (Luminy 1997), 1997.

\bibitem{Bonatti-Eskin-Wilkinson}
Christian Bonatti, Alex Eskin, and Amie Wilkinson, \emph{Projective cocycles
  over {$SL(2,\mathbb{R})$} actions: measures invariant under the upper
  triangular group}, Ast\'erisque \textbf{415} (2020), 157--180.

\bibitem{Burger}
Marc Burger, \emph{{Horocycle flow on geometrically finite surfaces}}, Duke
  Mathematical Journal \textbf{61} (1990), no.~3, 779 -- 803. \MR{1084459}

\bibitem{CannonThurston}
James~W. Cannon and William~P. Thurston, \emph{Group invariant {P}eano curves},
  Geom. Topol. \textbf{11} (2007), 1315--1355. \MR{2326947 (2008i:57016)}

\bibitem{Conze-Guivarc'h}
Jean-Pierre Conze and Yves Guivarc'h, \emph{Limit sets of groups of linear
  transformations}, Sankhyā: The Indian Journal of Statistics, Series A
  (1961-2002) \textbf{62} (2000), no.~3, 367--385. \MR{1803464}

\bibitem{Dal'Bobook}
Fran\c{c}oise Dal'Bo, \emph{Geodesic and {H}orocyclic {T}rajectories},
  Universitext, Springer-Verlag London, Ltd., London; EDP Sciences, Les Ulis,
  2011, Translated from the 2007 French original. \MR{2766419}

\bibitem{Dal'Bo1999}
Fran{\c{c}}oise Dal'Bo, \emph{Remarques sur le spectre des longueurs d'une
  surface et comptages}, Boletim da Sociedade Brasileira de Matem{\'a}tica -
  Bulletin/Brazilian Mathematical Society \textbf{30} (1999), no.~2, 199--221.

\bibitem{Eberlein}
Patrick Eberlein, \emph{Horocycle flows on certain surfaces without conjugate
  points}, Trans. Amer. Math. Soc \textbf{233} (1977), 1--36. \MR{516501}

\bibitem{Furstenberg2}
Harry Furstenberg, \emph{The unique ergodicity of the horocycle flow}, Recent
  advances in topological dynamics ({P}roc. {C}onf., {Y}ale {U}niv., {N}ew
  {H}aven, {C}onn., 1972; in honor of {G}ustav {A}rnold {H}edlund), Springer,
  Berlin, 1973, pp.~95--115. Lecture Notes in Math., Vol. 318. \MR{0393339}

\bibitem{Guivarc'h1990}
Yves Guivarc'h, \emph{Produits de matrices aléatoires et applications aux
  propriétés géometriques des sous-groupes du groupe linéaire}, Ergodic
  Theory and Dynamical Systems \textbf{10} (1990), no.~3, 483–512.
  \MR{1074315}

\bibitem{Hedlund}
Gustav~A. Hedlund, \emph{Fuchsian groups and transitive horocycles}, Duke Math.
  J. \textbf{2} (1936), no.~3, 530--542. \MR{1545946}

\bibitem{Kulikov}
M.~Kulikov, \emph{The horocycle flow without minimal sets}, Comptes Rendus
  Mathematique \textbf{338} (2004), no.~6, 477--480. \MR{2057729}

\bibitem{Labourie2006}
Fran{\c{c}}ois Labourie, \emph{Anosov flows, surface groups and curves in
  projective space}, Inventiones Mathematicae \textbf{165} (2006), no.~1,
  51--114. \MR{2221137}

\bibitem{Martinez-Matsumoto-Verjovsky}
Matilde Mart\'{\i}nez, Shigenori Matsumoto, and Alberto Verjovsky,
  \emph{Horocycle flows for laminations by hyperbolic {R}iemann surfaces and
  {H}edlund's theorem}, J. Mod. Dyn. \textbf{10} (2016), 113--134. \MR{3503685}

\bibitem{MatsumotoMinimal}
Shigenori Matsumoto, \emph{Horocycle flows without minimal sets}, J. Math. Sci.
  Univ. Tokyo \textbf{23} (2016), no.~3, 661--673. \MR{3526584}

\bibitem{MatsumotoWeak}
\bysame, \emph{Weak form of equidistribution theorem for harmonic measures of
  foliations by hyperbolic surfaces}, Proceedings of the American Mathematical
  Society \textbf{144} (2016), no.~3, 1289--1297.

\bibitem{Ratner1990}
Marina Ratner, \emph{Strict measure rigidity for unipotent subgroups of
  solvable groups}, Inventiones mathematicae \textbf{101} (1990), no.~1,
  449--482. \MR{1062971}

\bibitem{Ratner1992}
\bysame, \emph{Raghunathan's conjectures for {$SL(2,\mathbb{R})$}}, Israel
  Journal of Mathematics \textbf{80} (1992), no.~1, 1--31. \MR{1248925}

\bibitem{Ratner1994}
\bysame, \emph{Invariant measures and orbit closures for unipotent actions on
  homogeneous spaces}, Geom. Funct. Anal. \textbf{4} (1994), no.~2, 236--257.
  \MR{1262705}

\bibitem{Roblin}
Thomas Roblin, \emph{Ergodicit\'e et \'equidistribution en courbure
  n\'egative}, M\'emoires de la Soci\'et\'e Math\'ematique de France, vol.~95,
  Soci\'et\'e Math\'ematique de France, 2003. \MR{2057305}

\bibitem{Tukia1988}
Pekka Tukia, \emph{A remark on a paper by {F}loyd}, {H}olomorphic {F}unctions
  and {M}oduli II, Mathematical Sciences Research Institute Publications,
  vol.~11, Springer, New York, NY, 1988, pp.~165--172. \MR{955838}

\bibitem{Tukia1995}
\bysame, \emph{The limit map of a homomorphism of discrete {M}\"obius groups},
  Publications Math\'ematiques de l'IH\'ES \textbf{82} (1995), 97--132 (en).
  \MR{98e:57052}

\end{thebibliography}
\end{document}